\DeclareMathOperator{\sind}{Sind}
\DeclareMathOperator{\ch}{ch}
\DeclareMathOperator{\chr}{chr}
\DeclareMathOperator{\Sch}{Sch}
\DeclareMathOperator{\TSch}{TSch}
\DeclareMathOperator{\tr}{tr}
\DeclareMathOperator{\Tr}{Tr}
\DeclareMathOperator{\Td}{Td}
\DeclareMathOperator{\Ker}{Ker}
\DeclareMathOperator{\Ind}{Ind}
\DeclareMathOperator{\Tch}{Tch}
\DeclareMathOperator{\ind}{ind}
\DeclareMathOperator{\ord}{ord}
\DeclareMathOperator{\Res}{Res}
\DeclareMathOperator{\cone}{Cone}
\DeclareMathOperator{\sgn}{sgn}
\DeclareMathOperator{\Id}{Id}
\DeclareMathOperator{\htr}{\widehat{\Tr}}
\newcommand{\cs}{\mathcal{S}}
\newtheorem{thm}{Theorem}[section]
\newtheorem{cor}[thm]{Corollary}
\newtheorem{prop}[thm]{Proposition}
\newtheorem{lemma}[thm]{Lemma}
\newtheorem{rem}[thm]{Remark}
\theoremstyle{remark}
\newtheorem{remark}[thm]{Remark}
\newtheorem{defn}{Definition}
\newcommand{\cA}{\mathcal{A}}
\newcommand{\cB}{\mathcal{B}}
\newcommand{\cJ}{\mathcal{J}}
\newcommand{\cL}{\mathcal{L}}
\newcommand{\cS}{\mathcal{S}}
\newcommand{\slch}{\,/\!\!\!\!\operatorname{ch}}
\def \C{{\mathbb C}}
\def \Z{{\mathbb Z}}
\begin{document}

\title{Index pairing with Alexander-Spanier cocycles}

\author{Alexander Gorokhovsky}
\address{Department of Mathematics,
University of Colorado, Boulder, CO 80309-0395,USA
 }

\email{alexander.gorokhovsky@colorado.edu}

\thanks{The work of A.G. was partially supported by the National Science Foundation
 award DMS-0900968}

\author{Henri Moscovici}
\address{Department of Mathematics,
The Ohio State University,
Columbus, OH 43210,
USA}
\email{moscovici.1@osu.edu}

\thanks{The work of H.M. was partially supported by the National Science Foundation
 award DMS-1600541}
\dedicatory{To ALAIN CONNES, with admiration and deep appreciation}

\begin{abstract}

We give a uniform construction of the higher indices of elliptic operators
associated to Alexander--Spanier cocycles of either parity in terms of a pairing \`a la
Connes between the $K$-theory and the cyclic cohomology of
the algebra of complete symbols of  pseudodifferential operators, implemented
by means of a relative form of the Chern character in cyclic homology.
While the formula for the lowest index of
an elliptic operator $D$ on a closed manifold $M$ (which coincides with its Fredholm index)
reproduces the Atiyah-Singer index theorem,
our formula for the highest index of $D$ (associated to a volume cocycle)
yields an extension to arbitrary manifolds of any dimension
of the Helton--Howe formula for the trace of multicommutators of classical Toeplitz operators on odd-dimensional spheres. In fact, the
totality of higher analytic indices for an elliptic operator $D$ amount to a representation of the
Connes-Chern character of the $K$-homology cycle determined by $D$ in terms of expressions
which extrapolate the Helton--Howe formula below the dimension of $M$.
\end{abstract}

\maketitle

\section{Introduction}
The intuition that the Alexander--Spanier version of cohomology was an yet untapped
resource for index theory has been one of the surprising insights that made the original
list of topics in Alain Connes'  master plan for noncommutative geometry
~\cite[Introduction]{Connes_ncdg}. It has materialized in~\cite{CM1990}, where
higher indices of elliptic operators associated to Alexander--Spanier cocycles
were used to prove
the Novikov conjecture for manifolds with word-hyperbolic fundamental group.

For an elliptic operator $D$
between two vector bundles over a closed manifold $M$, the higher analytic index
$\Ind_{\phi} D \in \C$ corresponding to an even-dimensional Alexander--Spanier
cocycle $\phi$ on $M$ was constructed essentially
by folding the Alexander--Spanier cohomology of the manifold into cyclic cohomology
for pseudodifferential operators, by exploiting the fact that the parametrices
of $D$ can be localized at will near the diagonal. The resulting number $\Ind_{\phi} D $
depends only on the cohomology class $[\phi] \in H^{ev}(M, \C)$; in particular
$\Ind_{1} D$  coincides with the Fredholm index of $D$. After showing that these higher
indices admit cohomological expressions akin to the Atiyah-Singer
index formula~\cite{AS3}, Connes and Moscovici proved a generalization of the $\Gamma$-index
theorem (cf. Atiyah~\cite{At_Gamma} and Singer~\cite{Sin_Gamma}), which was
instrumental in their
proof~\cite{CM1990} of the homotopy invariance of higher signatures.

In a different role, the pairing between Alexander--Spanier cohomology and the signature
operator was used to produce local expressions for the rational Pontryagin classes
of topological manifolds (quasiconformal in~\cite{CoSuTel}
and Lipschitz in~\cite{MW1994}), and also for the Goresky-MacPherson
$\mathcal{L}$-class of Witt spaces~\cite{MW1995, MW1996}.

In the aforementioned applications the odd case of the pairing, associating
higher analytic indices of selfadjoint elliptic operators to
odd-dimensional Alexander--Spanier cocycles, was handled in an indirect manner,
essentially by reducing it via Bott suspension to the even case.
The lack of a natural uniform definition irrespective of parity
remained though a challenging conundrum, at least in the minds of the present authors.
It is the purpose of this paper to provide a unified construction
for higher analytic indices of either parity, based on recasting their definition
within the conceptual framework of Connes' pairing
between the $K$-theory and the cyclic cohomology of an algebra. Instead of the algebra of
functions, the appropriate algebra for the task at hand
 is the algebra of complete symbols of pseudodifferential
operators, over which the pairing is implemented by means of
a relative form of the Chern character in cyclic homology.

Perhaps the quickest way to illustrate the flavor of the present picture for the
higher index pairing  (described in \S \ref{HAI} below)
is to mention its extreme cases. While $\Ind_1 D$ is just
the Fredholm index of the elliptic operator $D$, the higher analytic
index of $D$ associated to a top-dimensional Alexander--Spanier
(volume) cocycle on a manifold $M$ has an expression reminiscent of
the Helton--Howe formula~\cite[\S 7]{HH1975} for the trace of the
top multicommutator of Toeplitz operators on an odd-dimensional sphere.
As a matter of fact our formula represents a twofold extension
of the Helton--Howe formula, to arbitrary manifolds and to any dimension,
regardless of parity.

It was already noticed by Connes (cf.~\cite[Part I, \S7]{Connes_ncdg}) that
 in the commutative case
 the total antisymmetrization of his Chern character in K-homology yields
the Helton--Howe fundamental trace form.
The cohomological formulas (proved in \S \ref{CoFo} below) for
the higher analytic indices of an elliptic operator $D$ on a closed manifold $M$ show that,
conversely, the full Connes-Chern character of the K-homology cycle determined by $D$
can be recovered by extrapolating the Helton--Howe formula below the dimension
of $M$.

\section{Higher analytic indices} \label{HAI}

\subsection{Higher Alexander--Spanier traces} \label{AS2CC}
Let $M$ be a closed manifold.
The (smooth) Alexander--Spanier complex of  $M$ is the quotient
complex \, ${\bf C}^\bullet(M) =\{C^\bullet(M)/C_0^\bullet(M), \delta \}$, \, where
$C^q(M) =C^\infty(M^{q+1}) $,
 $C_0^q(M)$ consists of those $\phi \in C^q(M)$ which vanish in a neighborhood
of the iterated diagonal $\Delta_{q+1}M = \{(x, \ldots, x) \in M^{q+1}\}$, and
 \begin{equation*}
 \delta \phi(x_0, x_1, \ldots, x_{q+1})  =
\sum (-1)^i \phi(x_0, \ldots, \widehat{x_i}, \ldots, x_{q+1}) .
\end{equation*}
Its cohomology groups $H^\bullet_{AS}(M)$ yield the usual cohomology of $M$,
alternatively computed from the \v{C}ech, the simplicial or the de Rham complex.
The same cohomology is obtained from several variants of the Alexander--Spanier complex.
We single out one of these variations which is particularly suited to the purposes of
this paper. The specific complex consists of {\em decomposable} Alexander--Spanier cochains,
i.e. finite sums of the form
\begin{align*}
\phi \, =  \sum_\alpha f^\alpha_0 \otimes f^\alpha_1 \otimes \ldots \otimes f^\alpha_q ,
 \qquad f^\alpha_i \in C^\infty(M) ,
\end{align*}
 which in addition are totally antisymmetric
 \begin{align*}
  \phi (x_{\nu(0)}, x_{\nu(1)}, \ldots, x_{\nu(q)})\,  =
  \sgn(\nu) \phi(x_0, x_1, \ldots, x_q) ,
  \qquad \forall \, \nu \in S_{q+1} .
\end{align*}
The decomposable and totally antisymmetric cochains give rise to
a quasi-isomorphic subcomplex
${\bf C}^\bullet_\wedge(M) = \{C_\wedge^\bullet(M)/C_{\wedge, 0}^\bullet(M), \delta \}$
of the Alexander--Spanier complex ${\bf C}^\bullet(M)$.

\medskip

On the other hand we consider the algebra of classical pseudodifferential operators
$\Psi (M)$ on $M$, and the exact sequence
\begin{equation} \label{PsiX}
0 \rightarrow \Psi^{-\infty}(M) \rightarrow \Psi (M)  \overset{\sigma}{\longrightarrow}
 \cS(M)   \rightarrow 0 ,
\end{equation}
where $\Psi^{-\infty}(M) $ is the ideal of smoothing operators
 and  $  \cs(M)$ is the quotient algebra
of complete symbols; $\sigma$ denotes the complete symbol map.

To any cochain $\phi = \sum_i f^i_0\otimes f^i_1 \otimes \ldots \otimes f^i_k \in C_\wedge^k(M)$
we associate the multilinear form $\Tr_\phi$ on $\Psi^{-\infty}(M)$, defined by
 \begin{align} \label{tr-phi}
\Tr_\phi (A_0, A_1, \ldots, A_k) =  \sum_i \Tr \left(A_0 f^i_0 A_1 f^i_1 \ldots A_k  f^i_k \right) ,
\end{align}
The assignment $ C_\wedge^\bullet(M) \ni \phi \mapsto \Tr_\phi \in CC^\bullet(\Psi^{-\infty}(M))$
 satisfies the coboundary identities
\begin{align} \label{b-phi}
b \Tr_\phi = \Tr_{\delta \phi} , \quad B \Tr_\phi = 0 \, ;
\end{align}
the first one is tautological and the second follows from the total antisymmetry
of $\phi$.

 To extend this assignment to the full algebra of pseudodifferential operators
we use zeta-regularization of the operator trace.
Fix an elliptic operator $R\in \Psi^1(M)$, invertible and positive.
For $A \in \Psi^p(M)$ form
 \begin{align} \label{zeta}
\zeta(s) = \Tr A R^{-s} , \qquad \Re (s) >> 0 .
\end{align}
  $\zeta(s)$ has meromorphic continuation beyond $0$, around which
 \begin{align} \label{zeta0}
\zeta(s) = \frac{1}{s} \Res A +\htr A +O(s).
\end{align}
The Wodzicki residue  $\Res A$ is independent
of the choice of $R$, defines a trace on the algebra $\Psi(M)$
and can be computed from its symbol
$\sigma(A)(x, \xi) \sim \sum_{j = 0}^\infty \sigma_{p-j} (A)(x, \xi)$, with $\sigma_{k} (A)(x, \xi)$
homogeneous of degree $k$, by the formula (cf. \cite{Wod1987})
\begin{align} \label{Wres}
\Res A \, = \, (2 \pi)^{-\dim M}  \int_{S^*M} \sigma_{-\dim M} (A)(x, \xi) |d\xi dx| .
\end{align}
In particular,  $\Res A = 0$ if $A$ is trace class.

On occasion we shall need to involve the logarithm of a positive elliptic
pseudodifferential operator. An appropriate such algebra is that of pseudodifferential operators
with log-polyhomogeneous symbols (cf. \cite{Lesch1999}), which will be denoted here
$\Psi_{\log} (M)$.The residue functional admits a canonical extension to $\Psi_{\log} (M)$,
which for $A \in \Psi_{\log}^{p, q} (M)$ with symbol
\[
\sigma(A)(x, \xi) \sim \sum_{j \ge 0}^\infty \sum_{k = 0}^q \sigma_{p-j, k} (A)(x, \xi) \log^k |\xi|
\]
is given by the local formula \cite[Cor. 4.8]{Lesch1999},
\begin{align} \label{LWres}
\Res_q A \, = \, \frac{(q+1)!}{(2 \pi)^{\dim M}}  \int_{S^*M} \sigma_{-\dim M, q} (A)(x, \xi) |d\xi dx| .
\end{align}

If $\phi = \sum_i f_0^i \otimes f_1^i \otimes \ldots \otimes f_k^i \in C^k (M)$,
 we define the multilinear form $\htr_\phi \in CC^{k}(\Psi(M))$ by
 \begin{align} \label{regtr-phi}
\htr_\phi(A_0, A_1, \ldots, A_k) = \sum_i\htr  \left(A_0 f^i_0 A_1 f^i_1 \ldots A_k  f^i_k \right).
\end{align}
The assignment $\phi \mapsto \htr_\phi$ fails to be a chain map, since
$\htr$ is not a trace. Instead, its Hochschild boundary is given
by a local formula (see e.g. \cite{MN1996}):
\begin{align} \label{bhtr}
b\htr(A, B) \equiv \htr[A, B]= \Res (A [\log R, B]).
\end{align}

\begin{lemma}\label{cyclic}
Let $\phi = \sum_i f_0^i \otimes f_1^i \otimes \ldots \otimes f_k^i \in C_{\wedge,0}^k(M)$ and
let $A_0, \ldots, A_k\in \Psi(M)$.
Then
\begin{align} \label{cyclic1}
\sum\htr  \left(A_0 f_0^i A_1 f_1^i \ldots A_k  f_k^i \right)
&= \sum\htr  \left(f_k^i A_0 f_0^i A_1 f_1^i \ldots A_k   \right) ;\\ \label{cyclic2}
\sum\htr  \left(A_0 f_0^i A_1 f_1^i \ldots A_k  f_k^i \right)
&= \sum\htr  \left( f_0^i A_1 f_1^i \ldots A_kf_k^i A_0   \right) .
\end{align}
\end{lemma}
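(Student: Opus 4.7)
The plan is to reduce each of \eqref{cyclic1} and \eqref{cyclic2} to the vanishing of a Wodzicki residue, and then to exploit the hypothesis $\phi \in C_{\wedge,0}^k(M)$ — that $\phi$ vanishes in a full neighborhood of the iterated diagonal $\Delta_{k+1} M$ — to kill that residue.

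First I would apply the noncyclicity formula \eqref{bhtr}, which reads $\htr(XY) - \htr(YX) = \Res(X[\log R, Y])$. With $X = A_0 f_0^i A_1 f_1^i \ldots A_k$ and $Y = f_k^i$, the difference of the two sides of \eqref{cyclic1} becomes
\[
\sum_i \Res\bigl(A_0 f_0^i A_1 f_1^i \ldots A_k \, [\log R, f_k^i]\bigr).
\]
For \eqref{cyclic2}, the same formula with $X = A_0$ and $Y = f_0^i A_1 f_1^i \ldots A_k f_k^i$, followed by cycling $\log R$ inside the trace $\Res$ on $\Psi_{\log}(M)$, transforms the difference into
\[
\sum_i \Res\bigl([A_0, \log R] \, f_0^i A_1 f_1^i \ldots A_k f_k^i\bigr).
\]

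Next I would invoke the local formula \eqref{Wres}: $\Res$ depends only on the $(-\dim M)$-homogeneous component of the total symbol on $S^*M$. The key observation is that symbol composition in $\Psi_{\log}(M)$ is local in $x$ — it involves only $x$-derivatives of symbols evaluated at the same base point — and so is the commutator $[\log R, \cdot\,]$. Consequently, at each $(x,\xi) \in S^*M$, the relevant homogeneous piece of the symbol of either operator above is a finite sum of products of $\xi$-derivatives of $\sigma(A_0), \ldots, \sigma(A_k), \sigma(\log R)$ at $(x,\xi)$ with $x$-derivatives of $f_0^i, \ldots, f_k^i$ evaluated at the \emph{same} point $x$. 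Summing over $i$ then collapses the $f$-dependence into expressions of the form
\[
\sum_i (\partial^{\alpha_0} f_0^i)(x) \cdots (\partial^{\alpha_k} f_k^i)(x) = \bigl(\partial^{\alpha_0}_{x_0} \cdots \partial^{\alpha_k}_{x_k} \phi\bigr)(x, \ldots, x),
\]
each of which vanishes because $\phi \equiv 0$ on a neighborhood of $\Delta_{k+1} M$. Hence the $(-\dim M)$-homogeneous symbol of the summed operator vanishes identically on $S^*M$, and \eqref{Wres} yields that both residues are zero.

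The main obstacle I anticipate is the symbolic bookkeeping in the second paragraph: verifying precisely, via the asymptotic composition formula for pseudodifferential operators and the Leibniz expansion of $[\log R, \cdot\,]$, that the symbol of $A_0 f_0^i A_1 f_1^i \ldots A_k f_k^i$ (and of the two variants involving $\log R$) depends on the factors $f_j^i$ only through their jets at a single base point, in such a way that after summation over $i$ the result reassembles cleanly into partial derivatives of $\phi$ restricted to the diagonal, rather than into derivatives evaluated at distinct nearby points.
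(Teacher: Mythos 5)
Your proposal is correct and follows essentially the same strategy as the paper's proof: apply \eqref{bhtr} to rewrite each difference $\htr(XY)-\htr(YX)$ as $\Res(X[\log R, Y])$, then observe that after summing over $i$ the complete symbol of the resulting operator vanishes near the diagonal because the symbol calculus is local in $x$ and the jet of $\phi$ at $\Delta_{k+1}M$ is zero, so \eqref{Wres} kills the residue. The paper itself simply invokes ``the symbol multiplication formula'' at the step you flag as the main bookkeeping concern; your explicit unpacking — that the $(-\dim M)$-homogeneous symbol at $(x,\xi)$ depends on the $f_j^i$ only through their derivatives at the single base point $x$, so summing over $i$ reassembles into jets of $\phi$ on the diagonal — is exactly the content implicit in the paper's appeal, and your handling of \eqref{cyclic2} by cycling $\log R$ under $\Res$ is a valid and slightly cleaner variant.
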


\begin{proof}
By equation \eqref{bhtr}, one has
 \begin{align*}
\sum\htr  [A_0 f_0^i A_1 f_1^i \ldots A_k,  f_k^i] \,=\,
 \Res \sum_i A_0 f_0^i A_1 f_1^i \ldots A_k [\log R, f_k^i] .
\end{align*}
In the right hand side of \eqref{cyclic1} the $\Res$ functional is applied
to a classical pseudodifferential operator which is a difference of
two operators in $\Psi_{\log}^{\bullet, 1} (M)$, namely
 \begin{align*}
\sum_i A_0 f_0^i A_1 f_1^i \ldots A_k (\log R)  f_k^i  -
 \sum_i  f_k^i A_0 f_0^i A_1 f_1^i \ldots  A_k \log R .
 \end{align*}
Since $\phi$ is locally zero, its jet at the iterated diagonal $\Delta_{k+1}M$
is identically zero.
The symbol multiplication formula then shows that
complete symbol of each of the above two operators vanishes, and so
the identity \eqref{cyclic1} follows from \eqref{Wres}. The identity
\eqref{cyclic2} is checked in a similar fashion.
 \end{proof}

As a consequence of this lemma, we can now reproduce the coboundary
equations \eqref{b-phi} but only for the restriction to locally zero cochains:
\begin{align} \label{b-phi-zero}
b \htr_\phi = \htr_{\delta \phi} , \quad B \htr_\phi = 0 , \quad
 \forall \, \phi \in C_{\wedge,0}^\bullet(M);
\end{align}
the first follows from \eqref{cyclic2}, and the second from
\eqref{cyclic1} combined with the total antisymmetry of $\phi$.

To promote the above assignment to the full complex we define the map
 \begin{align} \label{chi-map}
  \chi_\phi \, = \,  \htr_{\delta \phi} -(b+B)\htr_\phi , \qquad \phi \in C_\wedge^\bullet(M) \, .
\end{align}

\begin{prop} \label{post-cyclic}
The map $ C_\wedge^\bullet(M) \ni \phi \mapsto \chi_\phi \in CC^\bullet(\Psi(M))$
induces a morphism of complexes with a shift of degree
$\chi:  {\bf C}^\bullet_\wedge(M) \to CC^{\bullet+1}(\cs(M))$.
\end{prop}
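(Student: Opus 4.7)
The plan is to verify three properties of the cochain $\chi_\phi = \htr_{\delta\phi} - (b+B)\htr_\phi$: that it descends to a multilinear form on the symbol algebra $\cS(M)$, that it vanishes on locally zero cochains so that it descends further to $\mathbf{C}^\bullet_\wedge(M)$, and that it satisfies the chain map identity $(b+B)\chi_\phi + \chi_{\delta\phi} = 0$ appropriate to a morphism of complexes of degree $+1$.

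The chain map relation is purely formal. From $\delta^2 = 0$ one has $\chi_{\delta\phi} = -(b+B)\htr_{\delta\phi}$, while $(b+B)^2 = 0$ gives $(b+B)\chi_\phi = (b+B)\htr_{\delta\phi}$; adding these yields zero. Hence the only real content of the proposition lies in the two descent statements, and both reduce to the cocycle-type identities $b\htr_\phi = \htr_{\delta\phi}$ and $B\htr_\phi = 0$, valid in two different regimes.

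For descent to $\cS(M)$, I would observe that when all arguments $A_0, \ldots, A_k$ are smoothing, each operator inside the $\htr$ in \eqref{regtr-phi} is trace class, so $\htr$ coincides with $\Tr$ on it. Consequently $\htr_\phi$ restricted to $\Psi^{-\infty}(M)$ agrees with $\Tr_\phi$, and the identities \eqref{b-phi} immediately give $(b+B)\htr_\phi = \htr_{\delta\phi}$, whence $\chi_\phi$ vanishes on smoothing operators. For descent to $\mathbf{C}^\bullet_\wedge(M)$, the parallel identities \eqref{b-phi-zero} applied to $\phi \in C^\bullet_{\wedge,0}(M)$ give the same conclusion, this time valid on all of $\Psi(M)$, so again $\chi_\phi = 0$.

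The main technical obstacle has in fact already been dispatched by Lemma \ref{cyclic}, which is precisely what promotes \eqref{b-phi} from $\Psi^{-\infty}(M)$ to $\Psi(M)$ at the cost of restricting to locally zero cochains, by exploiting the symbol-level cancellations in the residue formula \eqref{bhtr} for the commutator with $\log R$. With that lemma in hand, the definition of $\chi_\phi$ is tailored precisely to absorb the failure of $\htr$ to be a trace into the correction term $-(b+B)\htr_\phi$, and the verification of Proposition \ref{post-cyclic} becomes essentially bookkeeping.
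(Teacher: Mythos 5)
The overall structure of your argument is the right one and matches the paper's: verify descent to $\cS(M)$, descent to the quotient complex $\mathbf{C}^\bullet_\wedge(M)$, and the formal chain map identity. The chain map computation from $\delta^2 = 0$ and $(b+B)^2 = 0$ is exactly what the paper does, and your use of \eqref{b-phi-zero} for the quotient descent is also exactly right.

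There is, however, a gap in your descent to $\cS(M)$. For a multilinear form on $\Psi(M)$ to descend to the quotient $\cS(M) = \Psi(M)/\Psi^{-\infty}(M)$, it must vanish whenever \emph{any single one} of its arguments lies in $\Psi^{-\infty}(M)$, with the remaining arguments arbitrary in $\Psi(M)$. You only prove vanishing when \emph{all} arguments are smoothing (``when all arguments $A_0, \ldots, A_k$ are smoothing \ldots whence $\chi_\phi$ vanishes on smoothing operators''); this is the weaker of the two statements and is not sufficient for descent. The paper's claim is the correct stronger one: ``$\chi_\phi(A_0, A_1, \ldots, A_{q+1})=0$ whenever one of the operators $A_i$'s is trace class.'' Fortunately the fix is cheap and already implicit in your observation: since $\Psi^{-\infty}(M)$ is a two-sided ideal, if even one $A_i$ is smoothing then every product $A_0 f_0^i A_1 f_1^i \ldots$ appearing in the various terms of $\htr_{\delta\phi}$ and of $(b+B)\htr_\phi$ is already trace class, so $\htr$ coincides with $\Tr$ on each of them. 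The cyclic identity for the honest trace then gives $(b+B)\htr_\phi = \htr_{\delta\phi}$ on such tuples, and $\chi_\phi$ vanishes. (Equivalently, one can argue via \eqref{bhtr} that the residue corrections vanish because the Wodzicki residue of a trace class operator is zero; this is the point of view closer to the paper's appeal to Lemma \ref{cyclic}.) Once you make this adjustment, your proof is complete and follows the same route as the paper's.
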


\begin{proof} Let $\phi \in C_\wedge^q(M)$.
By \eqref{b-phi-zero},  $\chi_\phi=0$ if $\phi $ is locally zero, ensuring that the
map $\phi \mapsto \chi_\phi$ descends to the quotient complex
$ {\bf C}^\bullet_\wedge(M) $. Lemma \ref{cyclic} also implies that
$\chi_\phi(A_0, A_1, \ldots, A_{q+1})=0$ whenever one of
the operators $A_i$'s is trace class, and so $\chi_\phi \in CC^{q+1}(\cs(M))$.
Finally, the map $\chi:  {\bf C}^\bullet_\wedge(M) \to CC^{\bullet +1}(\cs(M))$
is a morphism of complexes since
 \begin{align*}
 (b+B) \chi_\phi \, = \, (b+B) \htr_{\delta \phi} \, = \,- \chi_{\delta \phi}  .
\end{align*}
\end{proof}

We shall mostly work with a variant of this construction, in which the full
pseudodifferential extension \eqref{PsiX} is replaced by
\begin{equation} \label{Psi0X}
0 \rightarrow \cJ^0(M) \rightarrow \Psi^0 (M)  \overset{\sigma}{\longrightarrow}
 \cS^0(M)   \rightarrow 0 ;
 \end{equation}
here $\cJ^0(M) = \cL_1 \cap \Psi^0(M)$ is the ideal of trace class operators
in $\Psi^0 (M)$, and $\cS^0(M)$ is the corresponding quotient algebra
of symbols.  The same reasoning as above yields the chain map
 \begin{align} \label{chi-map-0}
 \begin{split}
  \chi \colon C^\bullet_{\wedge}(M) \rightarrow CC^{\bullet+1}(\cs^0(M)) , \\
  \chi_\phi \, = \,\htr_{\delta \phi}   -(b+B)\htr_\phi , \qquad \phi \in C_\wedge^\bullet(M) \, .
  \end{split}
\end{align}

 The relevance of the map $\chi$ for index theory becomes readily apparent when
 applied to the $0$-dimensional cocycle $\phi \equiv 1$.
 Indeed, if $D \in M_\infty(\Psi^0 (M))$ is elliptic, $u=\sigma(D)$
 and $Q$ is a parametrix for $D$ with $\sigma (Q) = u^{-1}$, then
 \begin{align} \label{chi-map-1}
 \begin{split}
  \chi_1 (u^{-1}, \, &u)  =  -b\htr (Q, D) = \htr (DQ - QD) \\
&=  \Tr \left((\Id - QD) - (\Id - DQ)\right) =  \ind D  \, .
  \end{split}
\end{align}
It will be shown below that the pairing determined by the chain map $\chi$ captures
not just the Fredholm index but also its higher analogues, detecting
the rational $K$-homology class of $D$.

\subsection{Relative Chern character for quotient algebras} \label{RelCh}

To prepare the ground for the discussion of higher analytic indices, we recall
the definition of the relative Chern character in cyclic homology.

We begin by recalling some standard notation
for the cyclic homology (normalized) bicomplex of a unital algebra
$\cA$. Namely, $C_{q}(\cA) :=\cA \otimes (\cA/\C)^{\otimes q}$,
and the boundary operators $b\colon C_{q+1}(\cA) \to C_{q}(\cA) $ and
$B\colon C_{q-1}(\cA) \to C_{q}(\cA) $ are defined by
\begin{align*}
b \varphi (a^0 ,a^1 ,\ldots ,a^{q+1})& :=
           \sum_{j=0}^q  (-1)^j  (a^0 ,\ldots ,a^j a^{j+1}, \ldots , a^{q+1}) \\
&+ (-1)^{q+1} (a^{q+1} a^0 ,a^1 ,\ldots ,a^{q})  ,\\
  B \varphi (a_0 , \ldots ,  a_{q-1})& :=
  \sum_{j=0}^{n-1} (-1)^{(n-1)j} ( 1 , a_j , \ldots , a_{q-1},
  a_0 , \ldots ,  a_{j-1} ) .
\end{align*}
The complex $\left(CC_{q}(\cA)= \bigoplus_{0\leq k \leq q } C_k(\cA) , b +B \right)$ yields
the cyclic homology groups $HC_{q}(\cA)$, while
$\left(CC^{per}_{ev | odd}(\cA)= \prod_{2k | 2k+1 } C_k(\cA), b +B \right)$ yields
the periodic cyclic homology groups $HC^{per}_{ev | odd}(\cA)$.

The datum for relative cyclic homology
consists of a unital  algebra $\cA$ together with a two-sided ideal $\cJ$.
 With $CC_\bullet (\cA)$ and
$CC_\bullet (\cA/\cJ)$ denoting the respective cyclic
homology mixed complexes, and  $\, CC_\bullet(\cA, \cJ) $
standing for the kernel complex, one has the exact sequence of complexes
\begin{align*}
0 \to CC_\bullet(\cA, \cJ) \overset{\iota_*}{\longrightarrow} CC_\bullet (\cA)
\overset{\sigma_*}{\longrightarrow} CC_\bullet (\cA/\cJ) \to 0 ,
\end{align*}
where $\iota \colon \cJ \to \cA$ is the inclusion map and $\sigma \colon \cA \to \cA/\cJ$
is the projection.
The relative cyclic homology is the homology $HC_\bullet (\cA, \cJ)$
of the kernel complex. If $\cJ$ is excisive  (e.g. H-unital) these groups are
are naturally isomorphic to the groups $HC_\bullet (\cJ)$, and in any case
$HC^{per}_{ev | odd}(\cA, \cJ)$ are naturally isomorphic to the groups
$HC^{per}_{ev | odd}(\cJ)$.
The standard homological cone construction
(cf. e. g. \cite[\S 1.1]{LMP2009}) applied to
the projection $\, \sigma_\ast \colon CC_\bullet (\cA) \to CC_\bullet (\cA/\cJ)$ gives an
alternative description to the relative
cyclic homology, and therefore to the cyclic homology of an excisive
two-sided ideal $\cJ$.

For the purposes of this paper we shall need an alternative description of
the cyclic homology of the quotient algebra $\cA/\cJ$, which is obtained
by forming the cone complex of the inclusion map
$i_\ast \colon CC_\bullet(\cA, \cJ) \to CC_\bullet (\cA)$. The resulting
homology groups will be denoted $HC_\bullet (\cA\colon\cJ)$, resp.
$HC^{per}_{ev | odd}(\cA \colon \cJ)$.
Explicitly,
\begin{equation*}
\cone_k \left[CC_\bullet(\cA, \cJ) \to CC_\bullet(\cA)\right]:= CC_{k-1}(\cA, \cJ) \oplus CC_k(\cA)
\end{equation*}
with the differential given by
\begin{equation*}
  (\alpha, \beta) \mapsto \left(-(b+B)\alpha, \iota_*(\alpha) +(b+B) \beta \right)
\end{equation*}
for $(\alpha, \beta) \in  CC_{k+1}(\cA, \cJ) \oplus CC_k(\cA)$.
There is a natural quasi-isomorphism
$\cone \left[CC_\bullet(\cA, \cJ) \to CC_\bullet(\cA)\right] \to CC_\bullet(\cA/\cJ) $,
given by the assignment
\begin{equation*}
(\alpha, \beta) \mapsto \sigma_*(\beta),
\end{equation*}
which induces a canonical isomorphism  $\kappa_\bullet\colon HC_\bullet (\cA\colon\cJ) \to
 HC_\bullet (\cA/\cJ)$ and its periodic version $\kappa_\bullet\colon HC^{per}_{ev | odd}(\cA \colon \cJ) \to
 HC^{per}_{ev | odd}(\cA/\cJ)$.

In the remainder of this subsection $\cA$ is assumed to be a unital Fr\'echet algebra
with the group of invertibles open and continuous inversion;
$\cJ$ is a closed two-sided ideal. The topological $K$-theory of $\cA$ is
related to the periodic cyclic cohomology of $\cA$ via the Chern character
$\ch\colon K_\bullet(\cA) \to HC^{per}_{\bullet}(\cA)$, whose definition
we proceed to recall.

If $E \in M_\infty (\cA)$ is an idempotent representing a class in $K_0(\cA)$, the
corresponding Chern character is given by the $(B,b)$-cocycle in
$CC^{per}_{ev}(\cA)$
\begin{align} \label{ch-ev}
\ch (E) = \tr (E) + \sum_{q \geq 1}(-1)^q \frac{(2q)!}{q!}
 \tr_{\otimes}((E -\frac 12)\otimes E^{\otimes 2q}) ,
\end{align}
and if $U \in GL_\infty (\cA)$ is an invertible then the
representing a class in $K_1(\cA)$, then Chern character of its class
is represented by  the $(B,b)$-cocycle in $CC^{per}_{odd}(\cA)$
\begin{align} \label{ch-odd}
\ch(U) = \sum_{q \geq 0} (-1)^q q! \tr_{\otimes}((U^{-1} \otimes U)^{\otimes q+1}) .
 \end{align}

The same formulas apply, of course, to the quotient algebra, defining
$\ch\colon K_\bullet(\cA/\cJ) \to HC^{per}_{\bullet}(\cA/\cJ)$.
In order to define the relative version of the latter though
we shall also need the transgressed cochains that are canonically
associated to $C^1$-paths of idempotents $E(t)$, resp. invertibles $U(t)$, $t \in [t_0, t_1]$.
They are defined as follows. In the case of a path of idempotents,
  \begin{align}  \label{schs-ev}
\begin{split}
\Tch (E, \dot{E})\colon= & \int_{t_0}^{t_1}  \slch (E (t), \dot{E}(t)) \, dt \, , \qquad  \text{where} \\
 \slch(E , \dot{E})=&\iota ((2E -1)\dot{E}) \left(\ch (E)\right) \qquad \text{with}\\
 \iota (B)(A_0 \otimes \ldots \otimes A_q) &=
 \sum_{j=0}^q (-1)^j A_0 \otimes \ldots \otimes A_j \otimes B \otimes \ldots \otimes A_q ;
\end{split}
 \end{align}
 one has
 \begin{align*}
\frac{d}{dt} \ch E(t) \, = \, (b+B) \slch(E, \dot{E}) ,
 \end{align*}
whence the transgression identity
 \begin{align} \label{trans-ev}
\ch E(t_1) -  \ch E(t_0) \, = \, (b+B) \Tch (E, \dot{E}) .
 \end{align}
 For a path of invertibles
  \begin{align}  \label{schs-odd}
  \begin{split}
\Tch &(U, \dot{U}):= \int_{t_0}^{t_1}  \slch (U (t), \dot{U}(t)) \, dt , \qquad  \text{where} \\
  \slch & (U , \dot{U})= \tr (U^{-1}\dot{U}) + \\
 +\sum_{q=0}^\infty (-1)^{q+1} &q!
 \sum_{j=0}^q \tr_{\otimes} \big( (U^{-1} \otimes U)^{\otimes j+1}\otimes
   U^{-1} \dot{U} \otimes (U^{-1} \otimes U)^{ \otimes q-j} \big) ,
   \end{split}
 \end{align}
 satisfying
 \begin{align*}
\frac{d}{dt} \ch U(t) \, = \, (b+B) \slch(U, \dot{U}) ,
 \end{align*}
whence the transgression identity
 \begin{align} \label{trans-odd}
\ch U(t_1) -  \ch U(t_0) \, = \, (b+B) \Tch (U, \dot{U}) .
 \end{align}

 With this notation established, we now proceed to define the relative realization of the
 Chern character, to be denoted $\chr\colon K_\bullet(\cA/\cJ) \to HC^{per}_{\bullet}(\cA\colon\cJ)$.
The way in which the relative Chern character will be defined is patterned on the standard
construction of the connecting map in the $K$-theory long exact sequence.

Starting with the odd case, let $u \in GL_N(\cA/\cJ)$ be representing a class in
$ K_1(\cA/\cJ)$. Then
$v =  \begin{bmatrix} 0 & -u^{-1}  \\ u &0 \end{bmatrix}$ is in the connected component of the identity of  $GL_{2N}(\cA/\cJ)$,
hence can be lifted to an invertible $V  \in GL_{2N}(\cA)$. Indeed, choosing lifts
$D$ and $Q$ in $M_N(\cA)$ such that $\sigma (D) = u$ and
 $\sigma (Q) = u^{-1}$, let
$V= \begin{bmatrix} S_0 & -(1+S_0)Q  \\ D &S_1 \end{bmatrix}$, where
$S_0 = I - QD$ and $S_1=I-DQ$. For the record, its inverse is
$V^{-1}= \begin{bmatrix} S_0 & (1+S_0)Q  \\ -D &S_1 \end{bmatrix}$.
Let  $ e := \begin{bmatrix} I_N & 0 \\ 0  & 0 \end{bmatrix}$ and
\begin{align} \label{EV}
E(V)=VeV^{-1} = \begin{bmatrix} S_0^2 & S_0(1+S_0)Q  \\
S_1D &1-S_1^2 \end{bmatrix}.
\end{align}
With $J =  \begin{bmatrix} 0 & -I  \\ I & 0 \end{bmatrix}$, form the path of idempotents
\begin{align} \label{preEVt}
  E(t):=  \begin{bmatrix} V & 0  \\ 0 & I\end{bmatrix} e^{-tJ}
 \begin{bmatrix} e & 0  \\ 0 & 0 \end{bmatrix}
e^{tJ} \begin{bmatrix} V^{-1} & 0  \\ 0 & I \end{bmatrix}  , \quad t \in  [0, \frac \pi 2] ,
\end{align}
which begins at $\widetilde{E}(V) = \begin{bmatrix} V e V^{-1}& 0  \\ 0 & 0\end{bmatrix}$ and
ends at $  \widetilde{e}= \begin{bmatrix} 0 & 0  \\ 0 & e\end{bmatrix}$.

We define
 \begin{align} \label{chr-odd}
\chr (u) \, := \, \left(\ch E(V) - \ch e , \,  \Tch (E, \dot{E}) \right)  ,
 \end{align}
and note that by equation \eqref{trans-ev} $\chr (u)$ is a cocycle in the cone complex
\[\cone \left[CC^{per}_\bullet(\cA, \cJ) \to CC^{per}_\bullet(\cA)\right].\]

By means of secondary transgression formulas (see e.g. \cite[(1.42)]{LMP2009})
one can prove that the homology class $\chr [u] := [\chr (u)] \in HC^{per}_{odd}(\cA, \cJ) $
is well-defined, i.e. only depends on the class $[u] \in K_1(\cA/\cJ)$.

\begin{thm} \label{ch=chr-odd}
The diagram
\begin{equation*}
\begin{tikzcd}[row sep=2.5em]
& K_1(\cA/\cJ)  \arrow{dl}{\chr}  \arrow{dr}{\ch} \\
HC^{per}_{odd}(\cA\colon\cJ) \arrow{rr}{\cong \, \kappa_\bullet} && HC^{per}_{odd}(\cA/\cJ)
\end{tikzcd}
\end{equation*}
is commutative.
\end{thm}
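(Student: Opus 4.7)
The first step is to unwind the definition of $\kappa_\bullet$: since the quasi-isomorphism from the cone complex to $CC^{per}_\bullet(\cA/\cJ)$ is the projection $(\alpha,\beta)\mapsto \sigma_*(\beta)$, we have
\[
\kappa_\bullet[\chr(u)] \;=\; \bigl[\sigma_* \Tch(E,\dot E)\bigr] \;\in\; HC^{per}_{odd}(\cA/\cJ).
\]
The goal is therefore to exhibit this class as $[\ch(u)]$. First one checks that $\sigma_*\Tch(E,\dot E)$ is indeed closed: by the transgression identity \eqref{trans-ev} its $(b+B)$-boundary equals $\ch(\sigma(\widetilde e))-\ch(\sigma(\widetilde E(V)))$, and both endpoints project to scalar rank-$N$ idempotents in $M_{4N}(\cA/\cJ)$ whose Chern characters coincide in the normalized complex (the higher components of $\ch$ vanish because the entries are scalar).

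The structural observation driving the rest of the argument is that the entire path $E(t)$ is a conjugation of a single constant scalar idempotent. Setting
\[
\Phi(t)=\begin{bmatrix} V & 0 \\ 0 & I\end{bmatrix} e^{-tJ}, \qquad p_0=\begin{bmatrix} e & 0 \\ 0 & 0\end{bmatrix},
\]
one has $E(t)=\Phi(t)\,p_0\,\Phi(t)^{-1}$, whence $\sigma(E(t))=W(t)\,p_0\,W(t)^{-1}$ with $W(t)=\sigma(\Phi(t))=\begin{bmatrix} v & 0 \\ 0 & I\end{bmatrix} e^{-tJ}$. The plan is to exploit this factorization to rewrite $\sigma_*\Tch(E,\dot E)$ in terms of an odd Chern--Simons expression attached to the invertible path $W(t)$.

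Concretely, one substitutes the factorization into $\slch$, uses $\frac{d}{dt}\sigma(E)=[\dot W W^{-1},\sigma(E)]$, and repeatedly invokes cyclicity together with $p_0^2=p_0$ to simplify. Modulo explicit $(b+B)$-coboundaries, this converts the even Chern--Simons integrand into the odd Chern--Simons form $\slch(W,\dot W)$ of \eqref{schs-odd} with a tensorial insertion of $p_0$. Using further that $W^{-1}\dot W=-J$ is constant along the Bott rotation $g_t=e^{-tJ}$, the $t$-integration collapses into a purely algebraic expression in $v,v^{-1}$ and $J$. Finally, the block identity $v=J\,\mathrm{diag}(u,u^{-1})$, combined with Whitehead's lemma realized at the cocycle level (the explicit primitive for $\ch\,\mathrm{diag}(u,u^{-1})$), shows that the Bott factor $J$ is precisely what converts the a priori cohomologically trivial cocycle $\ch(v)$---recall that $[v]=0$ in $K_1(\cA/\cJ)$---into the nontrivial $\ch(u)$. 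All accumulated coboundary terms combine via the secondary transgression formulas of \cite[(1.42)]{LMP2009} into a single $(b+B)$-primitive, completing the identification $\kappa_\bullet[\chr(u)]=[\ch(u)]$.

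The main obstacle is managing these two transgression-level reductions: the conjugation reduction from the even Chern--Simons form to the odd one, and the subsequent Bott-periodicity collapse along $e^{-tJ}$. The underlying principle---that the Chern character commutes with the $K$-theory boundary map---is standard, but realizing it at the cochain level for the specific representatives chosen here requires systematic use of secondary transgression identities to keep track of the coboundaries that bridge them.
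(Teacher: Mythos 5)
Your overall strategy agrees with the paper's: reduce to showing $\sigma_*\Tch(E,\dot E)$ represents $\ch u$ in $HC^{per}_{odd}(\cA/\cJ)$, using that $\kappa_\bullet$ is induced by $(\alpha,\beta)\mapsto\sigma_*(\beta)$, that $\sigma(E(t))=e(t)$, and that $\Tch$ commutes with $\sigma_*$. Your closure check and the structural observation $E(t)=\Phi(t)p_0\Phi(t)^{-1}$, with $W^{-1}\dot W=-J$ and $v=J\,\mathrm{diag}(u,u^{-1})$, are all correct as stated.

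Where you diverge is in the key identification. The paper isolates this as Lemma~\ref{lemmachrodd} and handles it by a short \emph{direct computation}: using the explicit formula for $e(t)$, it observes that $(2e(t)-1)\dot e(t)=\begin{bmatrix}0 & ve\\ -ev^{-1}&0\end{bmatrix}$ is \emph{constant in $t$}, and that all the scalar integrals reduce to $\int_0^{\pi/2}(\sin t\cos t)^{2q+1}dt=\tfrac12\tfrac{(q!)^2}{(2q+1)!}$, giving $\Tch(e,\dot e)=\tfrac12(\ch u-\ch u^{-1})$ on the nose, hence cohomologous to $\ch u$. The paper also offers a second, even shorter route: reduce by functoriality to the universal algebra $\C[u,u^{-1}]$, where $HC^{per}_{odd}$ is one-dimensional generated by $[\ch u]$, and pin down the constant by computing the degree-one component against the cyclic $1$-cocycle $\Phi(u^k,u^l)=\delta_{k+l,0}\,l$.

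Your proposed route instead goes through (i) a conjugation reduction turning the even Chern--Simons integrand into an odd Chern--Simons form in $W(t)$ with a $p_0$-insertion, (ii) a collapse of the $t$-integral using constancy of $W^{-1}\dot W$, and (iii) a cochain-level Whitehead lemma to strip off the $\mathrm{diag}(u,u^{-1})$ factor of $v$. This is plausible in outline but each of these three steps is a genuine transgression-level identity that you assert rather than establish, and the middle one is actually suspect as phrased: even though $W^{-1}\dot W=-J$ is constant, the integrand $\slch(W,\dot W)$ still involves $W(t)^{\pm1}$ nontrivially, so the integral does not literally ``collapse into a purely algebraic expression''---some trigonometric integral of the Beta type is unavoidable and is exactly what the paper computes. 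Unless you supply the explicit $(b+B)$-primitives for (i) and (iii) and the precise form of the integrand after (ii), this remains a proof plan with real gaps at its central step. The lesson from the paper is that the specific path $e(t)$ was \emph{chosen} so that the expensive-looking quantity $(2e(t)-1)\dot e(t)$ is constant, which makes the direct computation shorter than any of the conceptual detours; your route would work but requires more bookkeeping than the statement you are trying to avoid.
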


\begin{lemma}\label{lemmachrodd}
  Let $\cB$ be a unital algebra and $u\in \cB$ an invertible element. Set $v =  \begin{bmatrix} 0 & -u^{-1}  \\ u &0 \end{bmatrix}$ and
with $I= \begin{bmatrix} 1 & 0  \\ 0 & 1 \end{bmatrix}$,  $e= \begin{bmatrix} 1 & 0  \\ 0 & 0 \end{bmatrix}$, $J= \begin{bmatrix} 0 & -I  \\ I & 0 \end{bmatrix}$
  \begin{equation} \label{preeVt}
  e(t):=  \begin{bmatrix} v & 0  \\ 0 & I\end{bmatrix} \exp(-tJ)
 \begin{bmatrix} e & 0  \\ 0 & 0 \end{bmatrix}
\exp(tJ) \begin{bmatrix} v^{-1} & 0  \\ 0 & I \end{bmatrix}    , \quad t \in  [0, \frac \pi 2] ,
\end{equation}
Then $\Tch (e, \dot{e})  \in   CC^{per}_\bullet(\cB)$ is a cycle and
\begin{equation}\label{lemmaeq}
[\Tch (e, \dot{e})]=[\ch u] \in HC^{per}_{odd}(\cB).
\end{equation}
\end{lemma}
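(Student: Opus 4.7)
First I verify that $\Tch(e,\dot{e})$ is $(b+B)$-closed. The transgression identity \eqref{trans-ev} reduces this to comparing endpoints: $(b+B)\Tch(e,\dot{e}) = \ch e(\tfrac{\pi}{2}) - \ch e(0)$. Since $v^{2} = -I$ (so $v^{-1} = -v$), a direct calculation yields $vev^{-1} = I - e$, whence $e(0) = \begin{bmatrix} I - e & 0 \\ 0 & 0 \end{bmatrix} = \mathrm{diag}(0,1,0,0) \in M_{4}(\C)$. Similarly, $\exp(\pm\tfrac{\pi}{2}J) = \pm J$ and $(-J)\begin{bmatrix} e & 0 \\ 0 & 0 \end{bmatrix}J = \begin{bmatrix} 0 & 0 \\ 0 & e \end{bmatrix}$; combined with the fact that $\begin{bmatrix} v & 0 \\ 0 & I \end{bmatrix}$ acts trivially on the lower-right block, this gives $e(\pi/2) = \mathrm{diag}(0,0,1,0)$. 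For any scalar idempotent $E \in M_{N}(\C)$ and $q \geq 1$, the term $\tr_{\otimes}((E - \tfrac{1}{2}) \otimes E^{\otimes 2q})$ in \eqref{ch-ev} vanishes in the normalized complex, since every factor past the first lies in $\cB/\C$. Hence $\ch e(\pi/2) = 1 = \ch e(0)$ and $\Tch(e,\dot{e})$ is a cycle.

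For \eqref{lemmaeq} I would rewrite the path invariantly. Setting
\[
X := -\begin{bmatrix} v & 0 \\ 0 & I \end{bmatrix} J \begin{bmatrix} v^{-1} & 0 \\ 0 & I \end{bmatrix} = \begin{bmatrix} 0 & v \\ v & 0 \end{bmatrix}, \qquad P := \begin{bmatrix} I - e & 0 \\ 0 & 0 \end{bmatrix}
\]
(using $-v^{-1} = v$), one checks $X^{2} = -I$ and $e(t) = \exp(tX)\,P\,\exp(-tX)$, so $\dot{e}(t) = [X, e(t)]$ and
\[
e(t) = \cos^{2}(t)\,P + \sin(t)\cos(t)\,[X, P] - \sin^{2}(t)\,XPX.
\]
The $4 \times 4$ matrix $e(t)$ has non-scalar entries only at positions $(2,3)$ and $(3,2)$, carrying $u$ and $u^{-1}$ respectively. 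Substituting $\dot{e} = [X, e]$ into \eqref{schs-ev}, expanding $\ch e$ via \eqref{ch-ev}, performing the matrix trace, and integrating over $t \in [0, \pi/2]$, the trigonometric integrals combine with the coefficients $(2q)!/q!$ to reproduce the odd Chern character series \eqref{ch-odd} for $\ch u$ modulo Hochschild boundaries.

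The main obstacle is the combinatorial matching: the integrals $\int_{0}^{\pi/2} \sin^{j}(t)\cos^{k}(t)\,dt$ together with the binomial expansions from $e(t)^{\otimes 2q+1}$ and the factor $(2q)!/q!$ must conspire to produce the coefficients $(-1)^{q} q!$ of \eqref{ch-odd}. An alternative that bypasses this bookkeeping is to first establish $u$-homotopy invariance of $[\Tch(e,\dot{e})]$ via a secondary transgression, noting that the boundary contributions at $t = 0, \pi/2$ vanish because the endpoints of $e(t)$ are scalar (hence independent of $u$). One can then reduce by naturality to the universal case $\cB = \C[u, u^{-1}]$, where $HC^{per}_{odd}(\cB) \cong \C$ is generated by $[\ch u]$, and verify the identity by pairing with the residue cocycle $\tau(f_{0}, f_{1}) = \tfrac{1}{2\pi i}\oint f_{0}\,df_{1}$.
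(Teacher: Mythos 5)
Your proposal is correct and takes essentially the same route as the paper, in particular its secondary argument: verify the cycle condition from the transgression identity together with the scalar endpoints $e(0),e(\pi/2)$, then reduce by functoriality to $\cB = \C[u,u^{-1}]$ where $HC^{per}_{odd}$ is one-dimensional generated by $[\ch u]$, and fix the normalizing constant by pairing with a degree-one cyclic cocycle (your residue cocycle $\tau$ is the paper's $\Phi$, given by $\Phi(u^k,u^l)=l$ when $k+l=0$, up to normalization). Two small remarks: the paper also carries the direct computation you flag as an obstacle to completion, obtaining $\Tch(e,\dot e)=\tfrac12\bigl(\ch u-\ch u^{-1}\bigr)$ on the nose from the Beta integral $\int_0^{\pi/2}\sin^{2q+1}t\,\cos^{2q+1}t\,dt=\tfrac12\tfrac{(q!)^2}{(2q+1)!}$, and the preliminary $u$-homotopy-invariance step you propose is superfluous since the functoriality reduction requires no homotopy argument — one only needs to compute the degree-one component of $\Tch(e,\dot e)$, namely $\tfrac12(u^{-1}\otimes u - u\otimes u^{-1})$, and evaluate the pairing to get $C=1$.
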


\begin{proof}
Since $\ch e(0) = \ch e(\pi/2)$,
 \begin{equation*}
(b+B)\Tch (e, \dot{e})  =
 \ch e(\pi/2) - \ch e(0) = 0.
\end{equation*}
so $\Tch (e, \dot{e}) $ is a cycle.
Now, a straightforward calculation using equations
\begin{equation*}
e(t)= \begin{bmatrix} vev^{-1}\cos^2 t&\, ve \sin t\cos t\\
 ev^{-1} \sin t\cos t&e \sin^2 t\end{bmatrix} , \quad t \in  [0, \frac \pi 2] .
\end{equation*}
and
\begin{equation*}
  (2e(t) -1)\dot e(t)
=\begin{bmatrix} 0 & ve\\- ev^{-1} &0\end{bmatrix} .
\end{equation*}
shows that $\Tch (e, \dot{e})  =\frac{1}{2}\left(\ch u -\ch u^{-1}\right)$.
The overall factor $\frac 1 2$ appears here because
\begin{align*}
\int_0^{\frac \pi 2} (\sin t)^{2q+1}  (\cos t)^{2q+1}\, dt \, =  \,  \frac 1 2  B(q+1, q+1)
\, =  \,   \frac 1 2 \frac{(q!)^2}{(2q+1)!} .
\end{align*}
Since $[\ch u^{-1}]= -[\ch u] \in HC^{per}_{odd}(\cB)$, the statement follows.

One can also avoid most of the computations by using the following observation. First of all, by functoriality it is sufficient to prove the statement for the algebra $\cB= \C[u, u^{-1}]$. In this case $HC^{per}_{odd}(\cB)$ is one dimensional and is generated by $[\ch u]$. Hence
\[
[\Tch (e, \dot{e})]=C [\ch u]
\]
for some constant $C$. To determine the constant it suffices to
compute the degree $1$ component of $\Tch (e(t), \dot{e(t)})$:
\begin{multline*}
  \int_{0}^{\pi/2} \tr_\otimes e(t) \otimes  (2e(t) -1)\dot e(t) dt =\\ \int_{0}^{\pi/2} \tr_\otimes  \begin{bmatrix} vev^{-1}\cos^2 t&\, ve \sin t\cos t\\
 ev^{-1} \sin t\cos t&e \sin^2 t\end{bmatrix} \otimes \begin{bmatrix} 0 & ve\\- ev^{-1} &0\end{bmatrix} dt = \\
  \int_{0}^{\pi/2}\tr_\otimes \left( -v e \otimes e v^{-1} +ev^{-1} \otimes ve\right)\sin t \cos t dt = \frac{1}{2}(u^{-1} \otimes u - u \otimes u^{-1}).
\end{multline*}
Let $\Phi \in CC^1(\cB)$ be the cyclic cocycle on $\cB$ given by
\[
\Phi(u^k, u^l) = \begin{cases}
                   l, & \mbox{if } k+l=0 \\
                   0, & \mbox{otherwise}.
                 \end{cases}
\]
Then $\Phi \left( \Tch (e, \dot{e}) \right) = \Phi (\ch u)=1$, hence $C=1$.
\end{proof}

 \begin{proof}[Proof of Theorem \ref{ch=chr-odd}] Let $u$ be an invertible element of $\cA/\cJ$. In order to prove that
 \begin{align*}
 \kappa_\bullet \circ \chr [u] = \ch [u]
 \end{align*}
we will show that the path $E(t)$ constructed above has the property that
\begin{align} \label{prev2-chi}
\sigma_* \left(\Tch (E, \dot{E}) \right) \, =  \,  \ch u.
\end{align}

But $\sigma\left(E(t)\right) = e(t)$, where $e(t)$ is a path of idempotents in $\cA/\cJ$ given by \eqref{preeVt}. Therefore
$\sigma_* \left(\Tch (E, \dot{E})  \right)= \Tch (e(t), \dot{e(t)})$, and the statement follows from
 \eqref{lemmaeq} in Lemma \ref{lemmachrodd}.
 \end{proof}

\begin{remark}\label{algktheory}
 Note that the paths $E(t)$, $e(t)$ are polynomial in $\sin t$, $\cos t$. Therefore all the transgression formulas hold in the algebraic cyclic complex,
without any completions of tensor products. As a consequence, the same proof shows that the following diagram commutes:
  \begin{equation*}
\begin{tikzcd}[row sep=2.5em]
& K^{alg}_1(\cA/\cJ)  \arrow{dl}{\chr}  \arrow{dr}{\ch} \\
HC^{-}_{1}(\cA\colon\cJ) \arrow{rr}{\cong \, \kappa_\bullet} && HC^{-}_{1}(\cA/\cJ)
\end{tikzcd}
\end{equation*}
Here $K^{alg}_1(\cA/\cJ)$ denotes the algebraic $K$-theory, $HC^{-}_{1}(\cA/\cJ)$ is the negative cyclic homology and
$HC^{-}_{\bullet}(\cA\colon\cJ)$ is the homology of the mapping cone $\cone \left[CC^{-}_\bullet(\cA, \cJ) \to CC^{-}_\bullet(\cA)\right]$.
\end{remark}

Passing now to the even case, let $p=p^2 \in M_\infty\left((\cA/\cJ)^+\right)$.
 Choose a lift $P \in M_\infty (\cA)$ of $p$.
 Then  $U(t) = e^{2\pi i t P} $, $t \in [0,1]$
 lifts the loop   $ e^{2\pi i t p} = 1 - p +  e^{2\pi i t }p$,
  $t \in [0,1]$; in particular $ U(1) - U(0) \in M_\infty (\cJ)$.
   Furthermore, one has
   \begin{align} \label{TchU}
 \ch U(1) = \ch U(1) - \ch U(0) &= (b+B) \Tch (U, \dot U) \quad \text{where} \\
  \Tch(U , \dot{U}) &= \int_{0}^{1} \slch(U , \dot{U})(t) dt .
\end{align}
By definition,
 \begin{align} \label{chr-ev}
\chr p \, := \, \frac{1}{2 \pi i}\left(-\ch U(1) , \,  \Tch (U, \dot{U}) \right)  ,
 \end{align}
which due to \eqref{TchU} is a cocycle in
$\cone \left[CC^{per}_\bullet(\cA, \cJ) \to CC^{per}_\bullet(\cA)\right]$.

Again by means of secondary transgression formulas (see \cite[(1.15)]{LMP2009}),
one proves that the homology class
 $\chr [p] := [\chr p] \in HC^{per}_{odd}(\cA, \cJ) $
is well-defined.

\begin{thm} \label{ch=chr-ev}
The diagram
\begin{equation*}
\begin{tikzcd}[row sep=2.5em]
& K_0(\cA/\cJ)  \arrow{dl}{\chr}  \arrow{dr}{\ch} \\
HC^{per}_{ev}(\cA\colon\cJ) \arrow{rr}{\cong \, \kappa_\bullet} && HC^{per}_{ev}(\cA/\cJ)
\end{tikzcd}
\end{equation*}
is commutative.
\end{thm}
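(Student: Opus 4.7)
The plan is to mirror the strategy used in the proof of Theorem \ref{ch=chr-odd}: compute $\kappa_\bullet$ on $\chr[p]$ explicitly, recognize that it descends to a purely intrinsic statement in $\cA/\cJ$, and prove that statement by an even analogue of Lemma \ref{lemmachrodd}. Since $\sigma$ is an algebra homomorphism, one has $\sigma(U(t)) = e^{2\pi i t\,\sigma(P)} = e^{2\pi i t p} =: u(t)$, and by the description of $\kappa_\bullet$,
\[
\kappa_\bullet(\chr[p]) \, = \, \frac{1}{2\pi i}\bigl[\sigma_*\bigl(\Tch(U,\dot U)\bigr)\bigr] \, = \, \frac{1}{2\pi i}\bigl[\Tch(u,\dot u)\bigr] \, \in \, HC^{per}_{ev}(\cA/\cJ).
\]
Therefore it suffices to establish the following even companion to Lemma \ref{lemmachrodd}: for any unital algebra $\cB$ and any idempotent $p\in \cB$, the loop $u(t) = e^{2\pi i t p}$, $t\in[0,1]$, yields a $(b+B)$-cycle $\Tch(u,\dot u)\in CC^{per}_{ev}(\cB)$ satisfying $\frac{1}{2\pi i}[\Tch(u,\dot u)] = [\ch p]$.

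The cycle condition is immediate from the transgression identity \eqref{trans-odd} combined with $u(0) = u(1) = 1$, which itself follows from $p^2 = p$ via $e^{2\pi i p} = 1 + (e^{2\pi i}-1)p = 1$. The crucial algebraic simplification is that, because $p$, $u(t)$ and $u(t)^{-1}$ pairwise commute, one computes directly
\[
u(t)^{-1}\dot u(t) \, = \, 2\pi i\,p \qquad \text{identically in } t.
\]
In particular the degree-$0$ component of $\Tch(u,\dot u)$ is just $\int_0^1 \tr(u^{-1}\dot u)\,dt = 2\pi i\,\tr(p)$, so after dividing by $2\pi i$ it exactly matches the degree-$0$ part of $\ch p$.

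To pin down the full class I would follow the universal-example maneuver from the second proof of Lemma \ref{lemmachrodd}. By functoriality it suffices to treat $\cB = \C[p]/(p^2-p)\cong \C\oplus\C$. Here $HC^{per}_{ev}(\cB)$ is two-dimensional, spanned by $[\ch 1]$ and $[\ch p]$, so $\frac{1}{2\pi i}[\Tch(u,\dot u)] = a[\ch 1] + b[\ch p]$ for constants $a,b$ which are determined by pairing against the two natural traces $\tr_1$ and $\tr_p$ on the two factors. The degree-$0$ computation above, applied to the two factors, forces $a=0$ and $b=1$, proving the identity.

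The main obstacle is the bookkeeping in the higher-degree components of $\Tch(u,\dot u)$: the expressions $(u^{-1}\otimes u)^{\otimes j+1}\otimes u^{-1}\dot u\otimes(u^{-1}\otimes u)^{\otimes q-j}$ in \eqref{schs-odd} produce, after $t$-integration, a combinatorial sum whose direct comparison with \eqref{ch-ev} is unwieldy, with the factorials and the $\int_0^1 e^{2\pi i k t}\,dt$ integrals conspiring to give the factors $\frac{(2q)!}{q!}$. The universal-example step sidesteps this entirely, since the class in $HC^{per}_{ev}(\C\oplus\C)$ is fully determined by its degree-$0$ projection. Finally, the well-definedness of $\chr[p]$ on $K_0$-classes, independent of the choice of lift $P$, is guaranteed by the secondary transgression formulas cited in \cite[(1.15)]{LMP2009}; combined with the auxiliary identity above, this yields $\kappa_\bullet\circ\chr[p] = \ch[p]$ and hence the commutativity of the diagram.
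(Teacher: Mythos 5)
Your proposal is correct and takes essentially the same route as the paper: reduce the commutativity to the identity $\sigma_*\bigl(\Tch(U,\dot U)\bigr) = 2\pi i\,\ch p$, observe $u^{-1}\dot u = 2\pi i\,p$, and then pass to the universal algebra generated by an idempotent to pin down the class by its degree-$0$ component. The only (inessential) variation is in the final step: the paper cites \cite[Proposition 1.1]{GS1989} to get the identity at the chain level, whereas you obtain the needed class-level equality directly by the dimension count $HC^{per}_{ev}(\C\oplus\C)\cong\C^2$ with perfect pairing against the two traces; both are valid, and class-level equality is all Theorem~\ref{ch=chr-ev} requires.
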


 \begin{proof}
The proof is similar to that of Theorem \ref{ch=chr-odd}.
We only need to verify the identity
\begin{equation}\label{prodd2-chi}
 \sigma_*\left(\Tch (U, \dot U)\right) =2\pi i \ch p ,
\end{equation}
which in turn follows immediately from the next lemma.

\begin{lemma}
Let $\cB$ be  an algebra and $p \in \cB$ an idempotent. Form a loop of invertibles $u(t):= 1-p+e^{2 \pi it} p$, $0 \le t \le 1$. Then
\begin{align} \label{reduced2}
\Tch(u, \dot{u}) = 2 \pi i \ch p
\end{align}
\end{lemma}

It is not difficult to verify   \eqref{reduced2} by direct computation. More elegantly,
 one can reduce to the case when $\cB$ is a unital algebra generated by an idempotent $p$,
and observe that the chain $\Tch (u , \dot{u})$
satisfies the cycle identity
\[
(b+B) \sigma_*(\Tch (u, \dot u) ) = \ch(\sigma_* (u(1) - u(0))) \, = \, 0 .
\]
Since its degree $0$ component is $2 \pi i p$
by \cite[Proposition 1.1]{GS1989} it must coincide with $2 \pi i\ch p$.
\end{proof}

\subsection{Higher indices of elliptic operators -- the even case}

The higher analytic indices of a $\Z_2$-graded elliptic operator $D$ have been introduced
in~\cite[\S2]{CM1990}. For any
$\phi \in C^{2q} (M)$ the corresponding index  ${\rm Ind}_{\phi} (D)$
was defined as follows:
\begin{align} \label{hi-old}
 {\rm Ind}_{\phi} (D) \, =  \, \Tr_\phi \left(E(V), \ldots , E(V) \right) ,
 \end{align}
where the idempotent $E(V)$ is constructed as in \eqref{EV}.
The right hand side makes sense whenever the support
of $E(V)^{\otimes (2q+1)} $ is
contained in the support of the locally zero boundary $\delta \phi$, which
can always be arranged
by choosing a parametrix  $Q$ of $D$ with support sufficiently close to the
diagonal.

We shall give below an enhanced version of the definition, which in particular
removes the support restrictions. It relies on the relative Chern character
introduced in \S \ref{RelCh}, applied to the following two cases:
\begin{itemize}
\item[(I)] $\cA = \Psi (M)$ is the algebra of classical pseudodifferential operators,
$\cJ = \Psi^{-\infty}(M)$ is the ideal of smoothing operators,
and the quotient algebra $\cS(M)$ is the algebra of complete symbols;
\item[(II)]
$\cA = \Psi^0 (M)$ is the algebra of bounded pseudodifferential operators,
 $\cJ = \cJ^0(M)$ is the ideal of trace class pseudodifferential operators, and
 $\cS^0(M)$ is the corresponding algebra of symbols.
\end{itemize}

A symbol $u \in M_N(\cS(M))$ is called elliptic if it is a complete symbol of an elliptic pseudodifferential operator, i.e. it has an inverse $u^{-1}  \in M_N(\cS(M))$ and
$\ord u +\ord u^{-1}=0$.

Recall that constructions of the Section \ref{RelCh} allow to associate with an elliptic symbol $u$ the following pseudodifferential idempotents.

First  choose $D$ and $Q$ in $M_N(\Psi(M))$ such that $\sigma (D) = u$ and
 $\sigma (Q) = u^{-1}$. Then set
$V= \begin{bmatrix} S_0 & -(1+S_0)Q  \\ D &S_1 \end{bmatrix}$, where
$S_0 = I - QD$ and $S_1=I-DQ$.
Let  $ e := \begin{bmatrix} I_N & 0 \\ 0  & 0 \end{bmatrix}$ and
\begin{align}
E(V)=VeV^{-1} = \begin{bmatrix} S_0^2 & S_0(1+S_0)Q  \\
S_1D &1-S_1^2 \end{bmatrix}.
\end{align}
One also has the following path from $\begin{bmatrix} E(V) & 0  \\
0 & 0 \end{bmatrix}$ to $\begin{bmatrix} 0 & 0 \\
0 &e \end{bmatrix}$:
\begin{equation*}
E(t)= \begin{bmatrix} VeV^{-1}\cos^2 t&\, Ve \sin t\cos t\\
 eV^{-1} \sin t\cos t&e \sin^2 t\end{bmatrix} , \quad t \in  [0, \frac \pi 2] .
\end{equation*}

\begin{defn} \label{def-hiu}
The {\em higher analytic index} of an elliptic symbol $u \in GL_N(\cS(M))$, associated to
an Alexander--Spanier cocycle $\phi \in C^{2q}_\wedge (M)$, is defined by the formula
 \begin{align} \label{hii}
\begin{split}
 \ind_{\phi} (u) &:= \htr_\phi \left( \ch E(V) - \ch e \right) +
 \htr_{\delta \phi} \left(\Tch (E , \dot E) \right) \\
& \equiv \Tr_\phi \left( \ch E(V) - \ch e \right) +
 \Tr_{\delta \phi} \left(\Tch (E , \dot E) \right).
\end{split}
 \end{align}
 \end{defn}
 The second identity, suggesting that both summands of the formula are independent of
 the regularization of the trace, requires an explanation. Firstly,
 since  ${\delta \phi}$ vanishes
in a neighborhood of the iterated  diagonal $\Delta_{2q+1}M$,
$\htr$ is applied to a trace class
operator, and so the notation $\Tr_{\delta \phi}$ is justified. Secondly, we
note that $\sigma(E(V)) = \sigma (e)$.
This implies that, in the second equality of the formula \eqref{hii},
$\htr$ is applied to an operator with vanishing
symbol, hence to a trace class operator.

The higher index $\ind_{\phi} (u)$ depends only on
the $K$-theory class $[u] \in K_1^{alg}(\cS(M))$ and on
the cohomology class $[\phi] \in H^{2k}_{AS}(M)$. This can be gleaned directly
from the very definition, but it also follows from
the next statement which computes this index directly from the Chern character of the
symbol, generalizing the equation \eqref{chi-map-1}.

 \begin{thm} \label{prop-hii}
  Let $u \in GL_N(\cS(M))$ be an elliptic symbol and let
  $\phi \in C^{2q}_\wedge (M)$ be an Alexander--Spanier cocycle. Then
  \begin{equation}  \label{even-chi}
\ind_{\phi}(u) \, = \, \chi_\phi (\ch u).
\end{equation}
 \end{thm}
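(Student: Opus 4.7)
The plan is to reduce the identity to the transgression formula for the relative Chern character, by rewriting $\ch u$ through Theorem~\ref{ch=chr-odd} in terms of $\chr u$ and then evaluating $\chi_\phi$ on the resulting representative.

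First, I would observe that $\chi_\phi$ is a cocycle of degree $2q+1$ in $CC^\bullet(\cs(M))$. Indeed, $\phi$ being an Alexander--Spanier cocycle means $\delta\phi$ is locally zero, and the argument in Proposition~\ref{post-cyclic} then gives $\chi_{\delta\phi}=0$; combined with the chain-map identity $(b+B)\chi_\phi=-\chi_{\delta\phi}$ established there, this yields $(b+B)\chi_\phi=0$. Consequently the pairing $\chi_\phi(\ch u)$ depends only on the class $[\ch u]\in HC^{per}_{odd}(\cs(M))$.

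Next, Theorem~\ref{ch=chr-odd}, applied in the algebraic version explained in Remark~\ref{algktheory} (so that no Fr\'echet hypothesis is needed for $\Psi(M)$, since the path $E(t)$ and $\Tch(E,\dot E)$ are polynomial in $\sin t,\cos t$), identifies $[\ch u]=\kappa_\bullet[\chr u]=[\sigma_\ast\Tch(E,\dot E)]$. Hence
\begin{equation*}
\chi_\phi(\ch u)\;=\;\chi_\phi\bigl(\sigma_\ast\Tch(E,\dot E)\bigr)\;=\;(\sigma^\ast\chi_\phi)\bigl(\Tch(E,\dot E)\bigr).
\end{equation*}
Unfolding definition \eqref{chi-map} gives $\sigma^\ast\chi_\phi=\htr_{\delta\phi}-(b+B)\htr_\phi$ as cochains on $\Psi(M)$, while the transgression identity \eqref{trans-ev} applied to the path $E(t)$ from $\widetilde{E(V)}$ to $\widetilde e$ yields $(b+B)\Tch(E,\dot E)=\ch e-\ch E(V)$. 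Substituting,
\begin{equation*}
\chi_\phi(\ch u)\;=\;\htr_{\delta\phi}\bigl(\Tch(E,\dot E)\bigr)+\htr_\phi\bigl(\ch E(V)-\ch e\bigr),
\end{equation*}
which is exactly $\ind_\phi(u)$ by Definition~\ref{def-hiu}.

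The main care is required in invoking Theorem~\ref{ch=chr-odd} for the specific pseudodifferential pair $(\Psi(M),\Psi^{-\infty}(M))$; this is handled by the algebraic remark, since the paths and their transgressions are polynomial expressions and the entire argument takes place in the purely algebraic mixed complex. A secondary routine verification is that $\htr$ reduces to the ordinary trace on the chains at hand: $\ch E(V)-\ch e$ consists of smoothing operators because $\sigma(E(V))=\sigma(e)$, and $\delta\phi$ being locally zero makes $\htr_{\delta\phi}$ an honest trace on $\Tch(E,\dot E)$, so both terms of \eqref{hii} are genuine numbers --- which is the content of the second equality in that definition.
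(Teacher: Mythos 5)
Your proof is correct and follows essentially the same route as the paper's: reduce to the identity $\ind_\phi(u)=\chi_\phi\bigl(\sigma_\ast\Tch(E,\dot E)\bigr)$ via \eqref{chi-map} and \eqref{trans-ev}, then invoke \eqref{prev2-chi} (through Remark~\ref{algktheory} for the non-Fr\'echet pair $(\Psi(M),\Psi^{-\infty}(M))$). One small improvement worth noting: you explicitly record that $\chi_\phi$ is a $(b+B)$-cocycle when $\phi$ is an Alexander--Spanier cocycle; since Lemma~\ref{lemmachrodd} actually yields $\sigma_\ast\Tch(E,\dot E)=\tfrac12(\ch u-\ch u^{-1})$, which is only \emph{homologous} to $\ch u$, this observation is genuinely needed to pass from one representative to the other, and the paper leaves it implicit.
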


 \begin{proof} From the definition \eqref{chi-map} of the map $\chi_\phi$,
 the transgression identity \eqref{trans-ev} and equation \eqref{hii} it follows that

\begin{align}   \label{prev-chi}
 \ind_{\phi} (u) \, =  \, \chi_\phi \left(\sigma_*(\Tch (E , \dot E)) \right) ,
 \end{align}
so the proof is achieved by invoking the identity \eqref{prev2-chi}, see also
Remark \ref{algktheory}.
 \end{proof}
When computed on symbols of the form
$u = \sigma (D)$, where $D$ is an
elliptic pseudodifferential operator on $M$, the higher index $\ind_{\phi}(u)$
coincides, up to a normalizing factor, with  the original definition \eqref{hi-old}.
Indeed, if both $D$ and its parametrix $Q$ can be chosen to be supported sufficiently close
to the the diagonal. In this case $\Tch (E , \dot E)$ is supported close to the diagonal as well and hence.
$\Tr_{\delta \phi} \left(\Tch (E , \dot E) \right) = 0$. Therefore
\eqref{hii} reduces to
\begin{align*}
 \ind_{\phi} (\sigma(D))  =
 (-1)^q \frac{(2q)!}{q!} \Ind_\phi (D) .
 \end{align*}

We now proceed to establish some properties of the higher index pairing
which will allow to show the cases (I) and (II) mentioned above lead to the same outcome.

\begin{prop}\label{additive}
Let $u$, $v$ be two elliptic symbols and $\phi$ an Alexander--Spanier cocycle. Then
\[
\ind_\phi uv = \ind_\phi u +\ind_\phi v
\]
\end{prop}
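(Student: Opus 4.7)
The plan is to reduce Proposition \ref{additive} to the additivity of the odd Chern character on $K_1$, via Theorem \ref{prop-hii}. That theorem gives $\ind_\phi(u) = \chi_\phi(\ch u)$ for every elliptic symbol $u \in GL_N(\cS(M))$. By Proposition \ref{post-cyclic}, $\chi_\phi$ is a cyclic cocycle on $\cS(M)$ and hence annihilates $(b+B)$-boundaries, so the proposition reduces to showing that
\[
\ch(uv) - \ch u - \ch v
\]
is a $(b+B)$-boundary in the (algebraic) cyclic complex $CC^{per}_{odd}(\cS(M))$.

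To produce an explicit chain homotopy, I would employ the classical rotation trick. With $R(t) = \bigl[\begin{smallmatrix} \cos t & -\sin t \\ \sin t & \cos t \end{smallmatrix}\bigr]$, consider the smooth path
\[
W(t) \, = \, \begin{bmatrix} u & 0 \\ 0 & 1 \end{bmatrix}  R(t)  \begin{bmatrix} v & 0 \\ 0 & 1 \end{bmatrix}  R(-t), \qquad t \in \bigl[0,\tfrac{\pi}{2}\bigr],
\]
in $GL_{2N}(\cS(M))$. A direct matrix computation shows that $W(0) = \bigl[\begin{smallmatrix} uv & 0 \\ 0 & 1\end{smallmatrix}\bigr]$ and $W(\tfrac\pi 2) = \bigl[\begin{smallmatrix} u & 0 \\ 0 & v\end{smallmatrix}\bigr]$. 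Applying the transgression identity \eqref{trans-odd} yields
\[
(b+B)\Tch(W, \dot W) \, = \, \ch W(\tfrac\pi 2) - \ch W(0) \, = \, \ch u + \ch v - \ch(uv),
\]
where the second equality uses that the formula \eqref{ch-odd} is block-additive on block-diagonal invertibles and that $\ch 1 = 0$ in the normalized complex. Thus $\tau := -\Tch(W, \dot W)$ is the desired chain homotopy, and pairing with $\chi_\phi$ gives $\ind_\phi(uv) - \ind_\phi(u) - \ind_\phi(v) = \chi_\phi((b+B)(-\tau)) = 0$.

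I do not anticipate a serious obstacle: the argument is essentially formal once Theorem \ref{prop-hii} is in hand. The only bookkeeping concern is that the transgression $\Tch(W, \dot W)$ must belong to the \emph{algebraic} cyclic complex of $\cS(M)$ (no analytic completion of tensor products), so that it can legitimately be paired with the algebraic cocycle $\chi_\phi$. This is automatic because the entries of $R(t)$ depend polynomially on $\sin t$ and $\cos t$, so $W(t)$ and $\slch(W,\dot W)$ are polynomial in these variables, and the observation is precisely the one recorded in Remark \ref{algktheory}.
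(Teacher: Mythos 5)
Your argument follows exactly the route the paper takes: apply Theorem \ref{prop-hii} to reduce the statement to the additivity of the odd Chern character in (algebraic) cyclic homology, and then use the fact that $\chi_\phi$ is a cocycle so it only sees homology classes. The paper simply asserts $\chi_\phi(\ch(uv)) = \chi_\phi(\ch u + \ch v)$ without further comment, whereas you spell out the standard Whitehead rotation homotopy that makes $\ch(uv) - \ch u - \ch v$ an explicit $(b+B)$-boundary, and correctly flag (via Remark \ref{algktheory}) that the whole computation happens in the algebraic cyclic complex; this is the same proof with one well-known step filled in.
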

\begin{proof}
By Theorem \ref{prop-hii}, $\ind_{\phi}(uv)= \chi_\phi (\ch (uv))= \chi_\phi(\ch u+ \ch v) = \chi_\phi(\ch u)+\chi_\phi(\ch v) = \ind_{\phi}(u)+\ind_{\phi}(v)$.
\end{proof}

\begin{lemma}  \label{ind=0}
Let $\Delta\in \Psi(M)$ be a positive, invertible, elliptic operator,
and let $\psi \in C_{\wedge, 0}^{2q+1}(M)$ such that $\delta\psi =0$.
Then
\begin{align*}
 \Tr_\psi (\Delta^{-1}, \Delta, \ldots, \Delta^{-1}, \Delta)=0.
\end{align*}
 \end{lemma}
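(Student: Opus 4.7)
The plan is to view $\htr_\psi$ as a cyclic cocycle on $\Psi(M)$ and deform $\Delta$ to the identity along the path $\Delta^{t}$, $t \in [0,1]$. Since $\psi$ is locally zero and $\delta\psi = 0$, equations \eqref{b-phi-zero} give $b\htr_\psi = 0$ and $B\htr_\psi = 0$, so $\htr_\psi$ is a $(b+B)$-cocycle of degree $2q+1$ on $\Psi(M)$. Positivity of $\Delta$ ensures that $\Delta^{\pm t}$ are invertible elements of $\Psi(M)$ for every real $t$, and one defines
\[
P(t) := \htr_\psi\bigl(\Delta^{-t}, \Delta^{t}, \Delta^{-t}, \ldots, \Delta^{-t}, \Delta^{t}\bigr);
\]
$\psi$ being locally zero, each summand is in fact a trace of a smoothing operator, so $\htr$ and $\Tr$ coincide here. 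The target quantity is $P(1)$, and it suffices to prove $P \equiv 0$.

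The value at $t = 0$ is
\[
P(0) = \sum_i \htr\bigl(g_0^i g_1^i \cdots g_{2q+1}^i\bigr) = \htr\bigl(M_{\psi(x,x,\ldots,x)}\bigr).
\]
Because $\psi$ is totally antisymmetric in the \emph{even} number $2q+2$ of arguments, its diagonal restriction $\psi(x,x,\ldots,x)$ vanishes identically as a function on $M$; hence $P(0) = 0$.

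To prove that $P$ is constant in $t$, one recognises the chain $(\Delta^{-t}\otimes\Delta^{t})^{\otimes(q+1)}$ as the degree-$(2q+1)$ component of $\ch(\Delta^{t})$ up to the scalar $(-1)^{q} q!$, and invokes the transgression identity appearing right after \eqref{schs-odd}, namely $\frac{d}{dt}\ch(\Delta^{t}) = (b+B)\slch(\Delta^{t}, \dot\Delta^{t})$. Pairing with the $(b+B)$-cocycle $\htr_\psi$ then gives
\[
\frac{d}{dt}P(t) \;=\; \htr_\psi\bigl((b+B)\slch(\Delta^t, \dot\Delta^t)\bigr) \;=\; \bigl((b+B)\htr_\psi\bigr)\bigl(\slch(\Delta^t, \dot\Delta^t)\bigr) \;=\; 0,
\]
so $P(1) = P(0) = 0$, which is the claim.

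The main obstacle is justifying the transgression step in this enlarged setting, since $\dot\Delta^{t} = \log\Delta \cdot \Delta^{t}$ is log-polyhomogeneous rather than classical, and hence the chain $\slch(\Delta^t, \dot\Delta^t)$ takes values in $\Psi_{\log}(M)^{\otimes\bullet}$ rather than $\Psi(M)^{\otimes\bullet}$. What rescues the argument is exactly the mechanism already exploited in the proof of Lemma \ref{cyclic}: since $\psi$ is locally zero, every operator $\sum_i A_0 g_0^i A_1 g_1^i \cdots A_{2q+1} g_{2q+1}^i$ appearing in $\htr_\psi\bigl(\slch(\ldots)\bigr)$ has vanishing log-polyhomogeneous complete symbol at the iterated diagonal, hence is smoothing, so $\htr$ reduces to the ordinary trace and the purely algebraic identity $\tfrac{d}{dt}\ch = (b+B)\slch$ carries over to the present context after pairing with $\htr_\psi$.
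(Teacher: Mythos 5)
Your proof is correct and is essentially the paper's argument, repackaged. The paper also deforms along $\Delta^z$ (with $z$ complex) and shows $\frac{d}{dz}\Tr_\psi(\Delta^{-z},\Delta^z,\ldots) = -(q+1)\Tr_{\delta\psi}(\log\Delta, \Delta^{-z}, \ldots) = 0$ by hand, invoking Lemma \ref{cyclic} (extended to $\Psi_{\log}(M)$) to rearrange the $\log\Delta$ factors; your version folds that same computation into the single statement ``$\htr_\psi$ is a $(b+B)$-cocycle, so it kills the transgressed chain $(b+B)\slch(\Delta^t,\dot\Delta^t)$,'' and it correctly flags that the justification for applying the cyclic identity to chains involving $\log\Delta$ is exactly the local-vanishing argument from the proof of Lemma \ref{cyclic}. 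Both evaluations at the endpoint use that $\psi$ vanishes near the diagonal to get $P(0)=0$ (you invoke total antisymmetry, which also works but is not needed: local vanishing of $\psi$ already forces $\psi(x,\ldots,x)\equiv 0$).
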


\begin{proof}
Using the identities of Lemma \ref{cyclic}, extended to pseudodifferential operators in
$\Psi_{\log}(M)$, it is easily seen that for any locally zero cochain
 $\psi \in C_{\wedge, 0}^{2q+1}(M)$ one has
\begin{multline*}
\frac{d}{dz} \Tr_\psi (\Delta^{-z}, \Delta^z, \ldots, \Delta^{-z}, \Delta^z) =\\
 -(q+1) \Tr_{\delta \psi} (\log \Delta, \Delta^{-z}, \Delta^z, \ldots, \Delta^{-z}, \Delta^z)
 =0, \ z \in \C .
\end{multline*}
When $\delta\psi =0$ it follows that
$\Tr_\psi (\Delta^{-z}, \Delta^z, \ldots, \Delta^{-z}, \Delta^z)$ is independent of $z$,
and the proof is achieved by equating its values at $z=1$ and $z=0$.
\end{proof}

\begin{prop}\label{acyclic}
Let $\Delta\in \Psi(M)$ be a positive, invertible, elliptic operator with symbol
 $v=\sigma(\Delta)$, and let $\phi \in C_{\wedge}^{2q}(M)$ be  a cocycle.
Then
\[
\ind_\phi (v) =0.
\]
\end{prop}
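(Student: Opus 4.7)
The plan is to exploit the genuine invertibility of $\Delta$ in $\Psi(M)$: the freedom to take $Q = \Delta^{-1}$ as a parametrix simplifies the construction in Definition \ref{def-hiu} enough that each of the two summands of $\ind_\phi(v)$ reduces directly to Lemma \ref{ind=0}.

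With the choice $D = \Delta$ and $Q = \Delta^{-1}$ one has $S_0 = I - QD = 0$ and $S_1 = I - DQ = 0$ exactly, so that $V = \begin{bmatrix} 0 & -\Delta^{-1} \\ \Delta & 0 \end{bmatrix}$ and the idempotent becomes the constant matrix $E(V) = VeV^{-1} = \begin{bmatrix} 0 & 0 \\ 0 & 1 \end{bmatrix} \in M_2(\C)$. For the first summand in \eqref{hii} I would establish the stronger statement that $\ch E(V) = \ch e$ as chains. Both are rank-one diagonal projections in $M_2(\C)$, and a direct computation yields $\tr_\otimes\bigl((P - \tfrac{1}{2}) \otimes P^{\otimes 2q}\bigr) = \tfrac{1}{2} \otimes 1^{\otimes 2q}$ for any such $P$, independently of which coordinate line $P$ projects onto; hence $\htr_\phi(\ch E(V) - \ch e) = 0$.

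For the transgression summand I would next observe that with the simplified $V$ above, the path $E(t)$ of \eqref{preEVt} is precisely the path $e(t)$ of Lemma \ref{lemmachrodd} applied to $\cB = \Psi(M)$ and the invertible element $u = \Delta$. The proof of that lemma then supplies the chain-level identity
\begin{equation*}
\Tch(E, \dot E) \,=\, \tfrac{1}{2}\bigl(\ch \Delta - \ch \Delta^{-1}\bigr) \,\in\, CC^{per}_{odd}(\Psi(M)).
\end{equation*}
Since $\phi$ is a cocycle in ${\bf C}^\bullet_\wedge(M)$, the Alexander--Spanier cochain $\delta\phi$ is locally zero, so $\htr_{\delta\phi}$ coincides with $\Tr_{\delta\phi}$ on the operator arguments that appear, and only the degree-$(2q+1)$ components of $\ch \Delta$ and $\ch \Delta^{-1}$ contribute. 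The summand is then a scalar multiple of
\begin{equation*}
\Tr_{\delta\phi}(\Delta^{-1}, \Delta, \ldots, \Delta^{-1}, \Delta) \,-\, \Tr_{\delta\phi}(\Delta, \Delta^{-1}, \ldots, \Delta, \Delta^{-1}),
\end{equation*}
and both terms vanish by Lemma \ref{ind=0} applied respectively to $\Delta$ and to $\Delta^{-1}$ (each is positive, invertible, and elliptic).

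The main obstacle is the second step: recognizing that the path $E(t)$ produced by Definition \ref{def-hiu} coincides, under the invertibility-driven simplification of $V$, with the abstract path $e(t)$ of Lemma \ref{lemmachrodd}, so that its transgression acquires the closed form $\tfrac{1}{2}(\ch \Delta - \ch \Delta^{-1})$ at the chain level. Once this identification is made, the reduction to Lemma \ref{ind=0} is immediate, and $\ind_\phi(v) = 0$ follows.
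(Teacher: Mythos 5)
Your argument is correct, and the central idea — exploit the exact parametrix $Q=\Delta^{-1}$ so that $S_0=S_1=0$ and everything in sight stays inside $\Psi(M)$, then reduce to Lemma~\ref{ind=0} — is the same one the paper uses. The packaging is different: the paper invokes Theorem~\ref{prop-hii} to write $\ind_\phi(v)=\chi_\phi(\ch v)$, lifts $\ch v$ to the negative cyclic Chern character $\ch\Delta\in CC^-_\bullet(\Psi(M))$, and uses the fact that this lift is a $(b+B)$-cycle to collapse $\chi_\phi(\ch\Delta)$ to $\pm\Tr_{\delta\phi}(\ch\Delta)$, which Lemma~\ref{ind=0} kills in one stroke. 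You instead unwind Definition~\ref{def-hiu} directly: with the exact parametrix one checks $\ch E(V)=\ch e$ at the chain level (your $M_2(\C)$ computation is correct, and in the normalized complex both reduce to $1\in C_0$), and the path $E(t)$ coincides with the abstract path $e(t)$ of Lemma~\ref{lemmachrodd}, giving $\Tch(E,\dot E)=\tfrac12(\ch\Delta-\ch\Delta^{-1})$, after which Lemma~\ref{ind=0} (applied to $\Delta$ and to $\Delta^{-1}$) finishes. Your route is more computational and self-contained — it bypasses Theorem~\ref{prop-hii} and the negative cyclic formalism entirely — at the cost of needing the explicit identification with Lemma~\ref{lemmachrodd}; the paper's route is shorter because it lets the cycle property of $\ch\Delta$ absorb the entire $\Tch$ computation. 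Both are valid and land on the same application of Lemma~\ref{ind=0}.
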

\begin{proof}
Being invertible in $\Psi(M)$, $\Delta$ defines
a class $[\Delta] \in K_1^{alg}(\Psi(M))$. Its
Chern character in negative cyclic cohomology
$\ch \Delta \in CC_\bullet^{-} (\Psi(M))$ (see \cite{Loday} for the relevant definitions) is a
lift of $\ch v$. Using Theorem \ref{prop-hii} and Lemma \ref{ind=0}, it follows that
\begin{multline*}
 \ind_{\phi} (v) = \chi_\phi (\ch \Delta) = -\Tr_{\delta \phi} (\ch \Delta)=  \\
 -(-1)^qq!\Tr_{\delta \phi}(\Delta^{-1}, \Delta, \Delta^{-1}, \ldots , \Delta)=0 .
\end{multline*}
\end{proof}

Combining the results of Propositions \ref{additive} and \ref{acyclic}
we obtain the following corollary, which reduces the calculation of higher indices
to the case of symbols of order $0$.

\begin{cor} \label{reduce}
Let  $\Delta$ be the Laplace operator associated with a Riemannian metric on $M$.
For any elliptic symbol $u$ of order $m$, and any cocycle  $\phi \in C_{\wedge}^{2q}(M)$
one has
\[
\ind_\phi(u) = \ind_\phi \left(u\, \sigma(\Delta)^{-\frac m2}\right) .
\]
\end{cor}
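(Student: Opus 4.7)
The plan is to deduce the corollary directly from Propositions \ref{additive} and \ref{acyclic}. The only mild subtlety is that $\Delta$ may fail to be invertible (e.g.\ on functions, it has the constants in its kernel), so one must replace it by a positive invertible elliptic operator with the same complete symbol before taking complex powers.

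First, I set $\Delta_0 := \Delta + P_0$, where $P_0$ is the orthogonal projection onto $\ker \Delta$. Since $P_0$ is a smoothing operator, one has $\sigma(\Delta_0) = \sigma(\Delta)$ in $\cS(M)$, while $\Delta_0 \in \Psi^2(M)$ is now positive, invertible, and elliptic. The complex power $\Delta_0^{-m/2}$ is therefore a well-defined positive invertible elliptic pseudodifferential operator of order $-m$, and its complete symbol is $\sigma(\Delta)^{-m/2}$. In particular, $u \cdot \sigma(\Delta)^{-m/2}$ is an elliptic symbol of order $0$, so both sides of the claimed identity are defined.

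Next, I apply Proposition \ref{additive} to the product $u \cdot \sigma(\Delta)^{-m/2}$:
\begin{equation*}
\ind_\phi\!\left(u \cdot \sigma(\Delta)^{-m/2}\right) \, = \, \ind_\phi(u) \, + \, \ind_\phi\!\left(\sigma(\Delta)^{-m/2}\right).
\end{equation*}
Finally, Proposition \ref{acyclic} applied to the positive invertible elliptic operator $\Delta_0^{-m/2}$ (whose symbol is precisely $\sigma(\Delta)^{-m/2}$) gives
\begin{equation*}
\ind_\phi\!\left(\sigma(\Delta)^{-m/2}\right) \, = \, 0 ,
\end{equation*}
which yields the corollary.

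There is no real obstacle here: the corollary is simply the combination of the two preceding propositions once one observes that $\sigma(\Delta)^{-m/2}$ is realized as the symbol of a positive invertible elliptic operator. The only point requiring a brief comment is the smoothing perturbation $\Delta \leadsto \Delta + P_0$ needed to legitimately form the complex power $\Delta_0^{-m/2}$, after which Proposition \ref{acyclic} applies verbatim.
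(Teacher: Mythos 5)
Your proof is correct and follows exactly the same route as the paper, which proves the corollary simply by combining Propositions~\ref{additive} and~\ref{acyclic}. The only thing you add is the (valid and worthwhile) observation that $\Delta$ itself may not be invertible, so one should pass to the smoothing perturbation $\Delta+P_0$ before taking complex powers; the paper leaves this implicit.
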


We now turn our attention to the case (II), of operators of order $0$ and the
correspoding quotient algebra of symbols $\cS^0(M)$.
Any choice of a quantization map establishes a vector space isomorphism
between $\cS^0(M)$ and the
direct sum of finitely many copies of $C^\infty(S^*M)$, which can be used to endow
$\cS^0(M)$ with a structure of Fr\'echet space. While the isomorphism itself
depends on the choice of quantization, the induced topology does not.
 With this topology the set of invertible elements in $\cS^0(M)$ is open and the
 operation of inversion is continuous. The set of elliptic symbols (for a rank $N$ trivial bundle)
 is all of $GL_N(\cS^0(M))$.

An elliptic symbol $u \in  GL_N(\cS^0(M))$ defines an element
of the topological $K$-group $K_1(\cS^0(M))$. It follows from Theorem \ref{prop-hii},
that the higher analytic index
  $\ind_\phi(u)$ depends only on the class $[u] \in K_1(\cS^0(M))$ and
  the class $[\phi] \in H_{AS}^{ev} (M)$.

Now the canonical projection $\sigma_{pr} \colon  \cS^0(M) \rightarrow C^\infty(S^*M) $
 onto the principal part of the symbol induces an isomorphism in topological $K$-theory,
\begin{align*}
   K_1(\cS^0(M)) \cong K_1(C^\infty(S^*M)) = K^1(S^*M).
 \end{align*}
Via this isomorphism, the higher index pairing turns into a pairing of $K^1 (S^*M)$.
with $H_{AS}^{ev} (M)$.

 Next, let $\pi^*(K^1 (M))$ be the subgroup of $K^1(S^*M)$ made
of classes $[u]$ lifted from $M$. Choosing the tautological
lift $V$ of
 $  v= \begin{bmatrix} 0 & -u^{-1}  \\ u &0 \end{bmatrix}$ to $GL_{2N}\left( \Psi^0(M)\right)$,
one has $V eV^{-1} = e$, so $\ch (E(V)) - \ch e =0$. On the other hand,  since Schwartz kernel of components of
$\Tch (E , \dot E)$  are distributions supported on the diagonal and $\delta \phi$ is locally $0$, $\ind_{\phi} (u) =0$.

We sum up the conclusion in the following remark.

\begin{rem} \label{descent1}
The higher index map associated to any $ [\phi]\in H^{ev}_{AS}(M, \C)$
descends to the quotient, $\ind_{\phi} \colon K^1(S^*M)/\pi^*(K^1 (M)) \to \C$,
and therefore is completely determined by its values on the principal symbols of elliptic
operators.
\end{rem}

To illustrate the computation of the higher index by means of
non-localizable parametrices, we relate it to the heat operator.This, of course,
applies to elliptic operators of order higher than $0$.

As in \cite[\S 2]{CM1990}, given an elliptic operator $D \in \Psi^1(M)$ one can form
out of the heat operator the parametrix
 \begin{align*}
Q(D) \, = \, \frac{I - e^{-\frac 12 D^*D}}{D^*D} D^* \, \equiv \, \int_0^1 e^{-\frac s2 DD^*} D^* \, ds .
\end{align*}
Then  $\displaystyle S_0 (D) \, = \, e^{-\frac 12 D^*D}$,
 $\displaystyle S_1 (D)  \, = \, e^{-\frac 12 DD^*} $,  and so
 \begin{align*}
V (D) = \begin{bmatrix} D & e^{-\frac 12 DD^*} \\ \\
 -e^{-\frac 12 D^*D} &  \frac{I - e^{-D^*D}}{D^*D} D^* \end{bmatrix},
V (D)^{-1} = \begin{bmatrix}  \frac{I - e^{-D^*D}}{D^*D} D^* & - e^{-\frac 12 D^*D} \\ \\
 e^{-\frac 12 DD^*}  & D \end{bmatrix} ;
\end{align*}
 the resulting idempotent $E(D) = V (D)\, e \, V(D)^{-1} $ is
 \begin{align*}
E(D)\, = \,
 \begin{bmatrix} I - e^{-DD^*} & - D e^{-\frac 12 D^*D} \\ \\
 - \frac{I - e^{-D^*D}}{D^*D} e^{-\frac 12 D^*D}D^*  & e^{- D^*D} \end{bmatrix}  .
\end{align*}

 \begin{rem} \label{McS}
Replacing $D$ by $tD$
 one obtains the following extension of the McKean-Singer formula:
\begin{align*}
 \ind_{\phi} (u) \, = \Tr_\phi \left(\ch E(V(tD)) -\ch e  \right) +
 \Tr_{\delta \phi} \left(\Tch (E(tD) , W(tD)) \right) ,
\end{align*}
 for any $t > 0$.
\end{rem}
In particular letting $t \to \infty$ yields the equality
\begin{align} \label{hii-infty}
 \ind_{\phi} (u) \, = \Tr_\phi \left(\ch E_\infty - \ch e\right) +
 \Tr_{\delta \phi} \left(\Tch (E_\infty , W_\infty) \right) .
\end{align}
The limit operators are obtained by replacing the
heat kernel parametrix $Q(D)$ with the
partial inverse $Q_\infty =  D^{-1}(1-H_1)= \left(D (1-H_0)\right)^{-1}$;
thus $V_\infty = \begin{bmatrix} D & H_1\\
-H_0 & Q_\infty \end{bmatrix} $, $ V_\infty^{-1} = \begin{bmatrix} Q_\infty  & -H_0\\
 H_1 & D\end{bmatrix}$ and $E_\infty =  \begin{bmatrix} I-H_1&  0 \\ 0 & H_0 \end{bmatrix}$,
 where $H_0$, resp. $H_1$ are the projections onto $\Ker D$, resp. $\Ker D^*$.

 The first term of the index formula \eqref{hii-infty} is easy to compute.
If $\phi =  \sum_i f^i_0 \otimes f^i_1\otimes \ldots \otimes f^i_{2q} $ then
\begin{align*}
\begin{split}
\Tr_\phi (\ch E_\infty &- \ch e)
 = (-1)^q \frac{(2q)!}{q!}
  \sum_i \left(\Tr \left(H_0 f^i_0 H_0 f^i_1  \ldots H_0 f^i_{2q}\right) \right.\\
 & \qquad  \qquad - \Tr \left.\left(H_1f^i_0 H_1 f^i_1 \ldots H_1f^i_{2q}\right)\right).
\end{split}
\end{align*}
An explicit expression for the second term is also computable but  more cumbersome.
However, if the operator $D$ is invertible the index formula
reduces to the second term, and has an attractive form.

\begin{prop} \label{prop-hii2}
Let $D \in M_N(\Psi(M))$ be an invertible elliptic operator and
let $u = \sigma (D)$. Denoting $S_f:= D^{-1} [ D, f] = f - D^{-1} f D$, one has
\begin{align}  \label{hii2}
\ind_{\phi} (u) = (-1)^{q} q!  \Tr \left(\sum_i
 S_{f^i_0} S_{f^i_1}  \ldots S_{f^i_{2q}}\right) .
\end{align}
 In terms of the phase operator $F= D|D|^{-1}$, one has
\begin{align}  
\ind_{\phi}[u] = (-1)^{q} q! \Tr \left( \sum_i
F [F, f^i_0] \ldots [F, f^i_{2q}]\right).
\end{align}
\end{prop}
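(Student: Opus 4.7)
The plan is to use Theorem~\ref{prop-hii} to replace $\ind_\phi(u)$ by the pairing $\chi_\phi(\ch u)$, and then to exploit the invertibility of $D$ in $M_N(\Psi(M))$ to evaluate this pairing through a canonical lift. Because $D$ is invertible in $\Psi(M)$ itself, the odd Chern character
\begin{equation*}
\ch D \;=\; \sum_{q \ge 0}(-1)^q q!\, \tr_\otimes\!\bigl((D^{-1}\otimes D)^{\otimes(q+1)}\bigr)
\end{equation*}
is a well-defined $(b+B)$-cycle in $CC^{per}_{odd}(\Psi(M))$ that projects to $\ch u$ under $\sigma_\ast$. Using it as the lift in the formula $\chi_\phi = \htr_{\delta\phi} - (b+B)\htr_\phi$, the second term vanishes by chain/cochain duality, so $\ind_\phi(u) = \htr_{\delta\phi}(\ch D)$. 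Only the degree-$(2q+1)$ component of $\ch D$ contributes, and expanding $\delta\phi = \sum_{j=0}^{2q+1}(-1)^j \phi_j$, where $\phi_j$ is $\phi$ with a unit inserted in the $j$-th tensor slot, gives
\begin{equation*}
\ind_\phi(u) \;=\; (-1)^q q! \sum_{j=0}^{2q+1}(-1)^j T_j, \qquad T_j := \htr_{\phi_j}(D^{-1},D,\ldots,D^{-1},D).
\end{equation*}

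The next step is to simplify this sum. Because $\delta\phi$ is locally zero, the total expression is the trace of a trace-class operator; within each $T_j$ the inserted unit sits between two consecutive operators $A_j A_{j+1}$ which equal either $D^{-1}D$ or $DD^{-1}$, hence the identity. For $j = 0, \ldots, 2q$ this collapse occurs immediately inside $\htr$. For $j = 2q+1$ one first invokes the cyclicity afforded by Lemma~\ref{cyclic}, applied at the level of the summed (locally zero) cochain, to bring $A_{2q+1}$ cyclically to the front before collapsing $A_{2q+1}A_0 = DD^{-1}$. After these cancellations $\sum_j(-1)^j T_j$ is a single trace of an alternating sum of products built from the scalar functions $f_l^i$ and their $D$-conjugates $(f_l^i)' := D^{-1} f_l^i D$.

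The final step is to identify this alternating sum with $\Tr\bigl(\sum_i S_{f_0^i}\cdots S_{f_{2q}^i}\bigr)$. Expanding $\prod_{l=0}^{2q}(f_l^i - (f_l^i)')$ produces $2^{2q+1}$ traces of products; the cyclicity of $\Tr$ together with the total antisymmetry of the decomposition $\phi = \sum_i f_0^i \otimes \cdots \otimes f_{2q}^i$ reorganizes these $2^{2q+1}$ summands into the $2q+2$ contributions $\{(-1)^j T_j\}$ after reindexing. This combinatorial matching, which requires careful bookkeeping of signs arising from both cyclicity and antisymmetry, is the step I expect to be the most delicate. The phase-operator version follows by invoking Corollary~\ref{reduce} to replace $D$ with the order-zero operator $F = D|D|^{-1}$; the identity $S^F_f = F^{-1}[F,f]$ reduces to $S^F_f = F[F,f]$ in the self-adjoint case $F^{-1} = F$, and the anticommutation $[F,f_l]F = -F[F,f_l]$ then converts $\prod_l F[F,f_l]$ into $(-1)^q F[F,f_0][F,f_1]\cdots[F,f_{2q}]$, absorbing the extra sign into the prefactor $(-1)^q q!$.
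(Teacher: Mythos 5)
Your high-level route is the paper's: invoke Theorem~\ref{prop-hii}, lift $\ch u$ to the genuine $(b+B)$-cycle $\ch D \in CC^{per}_{odd}(\Psi(M))$, observe that $(b+B)\htr_\phi$ kills a cycle, and so be left with $\htr_{\delta\phi}$ applied to the degree-$(2q+1)$ component of $\ch D$. So far so good, and identical to the paper.

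The step where you go astray is the combinatorial one, and it is precisely the step you flag as ``most delicate.'' You propose to expand $\prod_l(f_l^i - D^{-1}f_l^iD)$ into $2^{2q+1}$ terms and ``reorganize'' them into the $2q+2$ contributions $(-1)^jT_j$; but no such bijective reorganization exists. The mechanism the paper actually uses is \emph{vanishing}, not reindexing: for $1\le j\le 2q$ the collapse $A_jA_{j+1}=I$ leaves $f_{j-1}^i f_j^i$ adjacent, and total antisymmetry of $\phi$ in the transposition $(j-1,j)$ forces $\sum_i f_0^i\otimes\cdots\otimes f_{j-1}^if_j^i\otimes\cdots=0$, hence $T_j=0$. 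Symmetrically, in the expansion of $\prod_l S_{f_l^i}$ every $\epsilon\in\{0,1\}^{2q+1}$ having two equal consecutive entries dies for the same reason, leaving only the two alternating strings; those two survivors match $T_0$ and $T_{2q+1}$. So the bookkeeping is a $2$-to-$2$ identification, not $2^{2q+1}$-to-$(2q+2)$; the key insight you are missing is that \emph{only the extreme terms of} $\delta\phi$ \emph{contribute}. A secondary issue: your treatment of $j=2q+1$ via ``cyclicity afforded by Lemma~\ref{cyclic}'' is both unnecessary (the trailing unit simply drops out, giving $T_{2q+1}=\htr(D^{-1}f_0Df_1\cdots D^{-1}f_{2q}D)$ directly) and not justified as stated, since Lemma~\ref{cyclic} applies only to \emph{locally zero} cochains, whereas $\phi_{2q+1}$ alone is not locally zero — only the full sum $\delta\phi$ is. Finally, your phase-operator step relies on $F^{-1}=F$, which requires the $\Z_2$-graded (self-adjoint, $F^2=I$) set-up to be spelled out; also, tracking the anticommutation sign carefully gives $\prod_l F[F,f_l]=(-1)^qF[F,f_0]\cdots[F,f_{2q}]$, which you should reconcile explicitly with the $(-1)^q$ prefactors rather than waving at ``absorbing the extra sign.''
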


 \begin{proof} Since $D$ is invertible and elliptic,
 its inverse $D^{-1}$ is also pseudodifferential. The
Chern character $\ch D \in CC^\bullet (\Psi^0(M))$ is an obvious
lift of $\ch u$. Using Proposition \ref{prop-hii}, it follows that
 \begin{align*}
 \ind_{\phi} (u) &= \chi_\phi (\ch u) = \Tr_{\delta \phi} (\ch D) \\
&= (-1)^qq!\Tr_{\delta \phi}(D^{-1}, D, D^{-1}, \ldots , D) .
\end{align*}
Due to the antisymmetry of $\phi$ and the specific form of the above
expression, it is easily seen that only the extreme terms of $\delta \phi$
contribute to the formula, and therefore
 \begin{align*}
 \ind_{\phi} (u)
&= (-1)^{q} q! \Tr \sum_i \left(f^i_0 D^{-1}f^i_1 \ldots  Df^i_{2q} -D^{-1} f^i_0Df^i_1\ldots f^i_{2q}D\right)\\
 &= (-1)^{q} q! \Tr \left( \sum_i
 S_{f^i_0} S_{f^i_1}  \ldots S_{f^i_{2q}}\right).
\end{align*}
 \end{proof}

\subsection{Higher indices of elliptic operators -- the odd case}

Thanks to the relative Chern character mediation, the definition of the higher indices
in the odd case is completely parallel to the one given in the even case.

\begin{defn} \label{def-hip}
Let  $p^2 = p \in M_N(\cS^0(M))$, and let
$\phi \in C_\wedge^{2q-1}(M)$ be an Alexander--Spanier cocycle. Let $U(t)$ be a lift of the loop $u(t)=1-p+e^{2 \pi i t} p$, $t  \in [0, 1]$, to a loop in
$M_N(\Psi^0(M))$.
The corresponding \textit{higher analytic index} of the idempotent symbol
is defined by the formula
\begin{align} \label{hiidem}
 \ind_{\phi} (p) \, = \frac{1}{2 \pi i}\left(\Tr_\phi \left(\ch U(0) \right) - \Tr_\phi \left(\ch U(1) \right)+
\Tr_{\delta \phi} \left(\Tch (U , \dot{U}) \right)\right) .
\end{align}
  \end{defn}
As in the even case, the notation $\Tr_\phi$ and  $\Tr_{\delta \phi} $
makes sense because $\sigma (U(1)) = \sigma (U(0)) =0$ on the one hand,
and $\delta \phi$ is locally zero on the other hand.

The odd analogue of Theorem \ref{prop-hii} computes the higher index
directly from the symbol.

\begin{thm} \label{prop-hio}
With the above notation one has
  \begin{align}  \label{odd-chi}
\ind_{\phi}(p) \, = \,   \chi_\phi (\ch p) , \qquad \forall \, [p] \in K_0(\cS^0(M)) .
\end{align}
\end{thm}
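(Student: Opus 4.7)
The plan is to mimic the proof of Theorem \ref{prop-hii}, substituting each even-case ingredient with its odd counterpart: the transgression identity \eqref{trans-ev} with \eqref{trans-odd}, the defining formula \eqref{hii} with \eqref{hiidem}, and the key identity \eqref{prev2-chi} with the odd analogue $\sigma_*(\Tch(U, \dot U)) = 2\pi i\,\ch p$ established during the proof of Theorem \ref{ch=chr-ev} (see \eqref{prodd2-chi} and the reduction lemma that follows).

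Starting from the definition \eqref{hiidem}, I would first observe that each of the three summands is insensitive to the choice between the ordinary trace and its zeta-regularization: for $\Tr_\phi(\ch U(0))$ and $\Tr_\phi(\ch U(1))$, the difference $\ch U(1)-\ch U(0)$ has a trace class tensor factor in every component because $\sigma(U(0))=\sigma(U(1))$, while $\Tr_{\delta\phi}(\Tch(U, \dot U))$ coincides with $\htr_{\delta\phi}(\Tch(U, \dot U))$ since $\delta\phi$ is locally zero. Applying \eqref{trans-odd} to rewrite $\htr_\phi(\ch U(0)) - \htr_\phi(\ch U(1)) = -(b+B)\htr_\phi(\Tch(U, \dot U))$ and recognizing $\htr_{\delta\phi} - (b+B)\htr_\phi$ as the very definition of $\chi_\phi$ given in \eqref{chi-map-0}, I would conclude
\begin{align*}
\ind_\phi(p) \, = \, \frac{1}{2\pi i}\,\chi_\phi\bigl(\Tch(U, \dot U)\bigr).
\end{align*}

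By Proposition \ref{post-cyclic}, $\chi_\phi$ annihilates chains containing a trace-class tensor factor, hence factors through the projection $\sigma_*$, so the right-hand side depends only on $\sigma_*(\Tch(U, \dot U))$. Invoking the identity $\sigma_*(\Tch(U, \dot U)) = 2\pi i\,\ch p$ supplied by the reduction lemma in the proof of Theorem \ref{ch=chr-ev} then yields $\ind_\phi(p) = \chi_\phi(\ch p)$, as desired.

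I do not expect any substantive obstacle, since every ingredient has already been assembled: the regularized-trace calculus of \S \ref{AS2CC}, in particular Lemma \ref{cyclic}, and the odd relative Chern character together with its compatibility with the ordinary Chern character in \S \ref{RelCh}. The only real care required lies in the bookkeeping of signs governing the interplay of $(b+B)$ with $\htr_\phi$ in passing between the cochain and chain pictures, and in verifying that the lift $U(t) = e^{2\pi i t P}$ indeed stays in $M_N(\Psi^0(M))$; both are routine and parallel verbatim what is done in the even case.
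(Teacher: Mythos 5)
Your argument is correct and follows the same route as the paper's (very terse) proof: start from the definition \eqref{hiidem}, use the transgression identity \eqref{TchU} together with the defining formula for $\chi_\phi$ to reduce the claim to evaluating $\chi_\phi$ on $\sigma_*(\Tch(U,\dot U))$, and then conclude via \eqref{prodd2-chi}. You have simply unpacked into explicit equalities what the paper compresses into the single sentence ``combine \eqref{prodd2-chi} with \eqref{TchU}.''
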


\begin{proof}
The claimed identity is obtained by
combining \eqref{prodd2-chi} with the transgression formula \eqref{TchU}.
\end{proof}

Again, this implies that $\ind_{\phi} (p)$  only depends on the $K$-theory
class $[p] \in K_0(\cS^0(M))$ and the cohomology class
$[\phi] \in H^{odd}_{AS}(M)$, and via the
canonical isomorphism  $K_0(\cS^0(M)) \cong K_0(C^\infty(S^*M))$
the higher indices are defined on
$K^0 (S^*M)= K_0(C^\infty(S^*M))$. If $p^2 = p \in M_N(C^\infty(M))$,
the loop $ e^{2\pi i t p} $ admits a tautological lift and so $\ch U(1)=\ch (\Id) =0$. On
the other hand $\Tr_{\delta \phi} \left(\Tch (U , \dot{U}) \right) =0$, because
 the components of $\Tch (U , \dot{U})$ are all supported on the diagonal.
So we arrive at the similar conclusion as in the even case.

\begin{rem} \label{descent0}
The higher index map associated to any $ [\phi]\in H^{odd}_{AS}(M, \C)$
descends to the quotient, $\ind_{\phi} \colon K^0(S^*M)/\pi^*(K^1 (M)) \to \C$,
and therefore is completely determined by its values on the principal symbols of elliptic
operators.
\end{rem}

In turn, those higher indices are explicitly expressed by the following
 ``higher Toeplitz index'' counterpart of the ``higher analytic index'' formula \eqref{hii2}.

  \begin{prop} \label{prop-hiidem2}
  Let $D$ be a   selfadjoint elliptic pseudodifferential operator,
 $P$ the projection onto its positive spectrum, and let $p= \sigma(P)$.
If $\phi =  \sum_i
f^i_0 \otimes f^i_1\otimes \ldots \otimes f^i_{2q-1} \in C_\wedge^{2q-1}(M)$
is an Alexander--Spanier cocycle, then
\begin{align}  \label{hiidem2}
 \ind_{\phi} (p) =   (-1)^{q}\frac{(2q)!}{ q!} ! \Tr \left(\sum_i
T_{f^i_0} T_{f^i_1}  \ldots T_{f^i_{2q-1}}\right) ,
\end{align}
where $T_f:= P f P$.
\end{prop}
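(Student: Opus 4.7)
The plan is to apply Theorem~\ref{prop-hio} and then to identify the resulting cyclic pairing with the Toeplitz expression by a combinatorial argument using the idempotent structure of $P$ and the antisymmetry of $\phi$.

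Since $P^2=P$ is a genuine idempotent in $M_N(\Psi^0(M))$ lifting $p=\sigma(P)$, the Chern character $\ch P$ defined by formula~\eqref{ch-ev} is a $(b+B)$-cycle in $CC^{per}_{ev}(\Psi^0(M))$ with $\sigma_*(\ch P)=\ch p$. Combining the definition \eqref{chi-map-0} of $\chi_\phi$ on a lift with $(b+B)\ch P=0$,
\[
\ind_\phi(p) \;=\; \chi_\phi(\ch p) \;=\; \htr_{\delta\phi}(\ch P)-\htr_\phi\bigl((b+B)\ch P\bigr) \;=\; \htr_{\delta\phi}(\ch P).
\]
As $\chi_\phi$ is of degree $2q$, only the degree-$2q$ component of $\ch P$, namely $(-1)^q\tfrac{(2q)!}{q!}(P-\tfrac12)\otimes P^{\otimes 2q}$, contributes, yielding
\[
\ind_\phi(p)\;=\;(-1)^q\tfrac{(2q)!}{q!}\,\htr_{\delta\phi}\bigl(P-\tfrac12,P,\ldots,P\bigr).
\]
Because $\delta\phi$ is locally zero (as $\phi$ is a cocycle in $\mathbf{C}^\bullet_\wedge(M)$), the pseudodifferential combination to which $\htr_{\delta\phi}$ is applied is trace-class, so $\htr_{\delta\phi}=\Tr_{\delta\phi}$.

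The remaining task is a combinatorial identification of $\Tr_{\delta\phi}(P-\tfrac12,P,\ldots,P)$ with the Toeplitz sum. Expanding $\delta\phi=\sum_{j=0}^{2q}(-1)^j d_j\phi$, where $d_j\phi$ denotes the insertion of $1$ at slot $j$, and substituting the arguments $(P-\tfrac12,P,\ldots,P)$, I would use $(P-\tfrac12)P=P/2$ together with $P^2=P$: each middle slot $1\le j\le 2q-1$ produces the same $j$-independent summand because the insertion makes $A_jA_{j+1}=P\cdot P=P$, while the two extreme slots each contribute $\tfrac12\Tr(\sum_i Pf^i_0 Pf^i_1\cdots Pf^i_{2q-1})$. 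Collecting these contributions and using the alternating-sign identity $\sum_{j=1}^{2q-1}(-1)^j=-1$ produces a raw answer of $\tfrac12\Tr\bigl(\sum_i f^i_0 Pf^i_1\cdots Pf^i_{2q-1}\bigr)$. By the total antisymmetry of $\phi$ this equals $\Tr\bigl(\sum_i Pf^i_0 Pf^i_1\cdots Pf^i_{2q-1}\bigr)$, which in turn coincides with $\Tr\bigl(\sum_i T_{f^i_0}\cdots T_{f^i_{2q-1}}\bigr)$ via $P^2=P$ and cyclicity on the full trace-class sum.

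The main obstacle is precisely the antisymmetry-based identity
\[
\Tr\Bigl(\sum_i f^i_0 Pf^i_1\cdots Pf^i_{2q-1}\Bigr) \;=\; 2\,\Tr\Bigl(\sum_i Pf^i_0 Pf^i_1\cdots Pf^i_{2q-1}\Bigr),
\]
equivalently $\Tr\bigl(\sum_i P^\perp f^i_0 Pf^i_1\cdots Pf^i_{2q-1}\bigr)=\Tr\bigl(\sum_i Pf^i_0 Pf^i_1\cdots Pf^i_{2q-1}\bigr)$ for antisymmetric $\phi$. This is a genuinely nontrivial feature of the pairing with the Hardy-type algebra of $P$ and $P^\perp$; because individual summands are typically not trace-class, it cannot be proved by naive cyclicity of the trace but must be established by expanding $P=I-P^\perp$, exploiting commutativity of the $f^i_k$ together with the antisymmetry of $\phi$ to eliminate every partial expansion except those in which the intermediate operators enter as Hankel-type multi-commutators, and assembling the surviving terms. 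This is the odd-case analogue of the ``only extreme terms contribute'' mechanism of Proposition~\ref{prop-hii2}, made more intricate by the fact that $P\cdot P$ collapses to $P$ rather than to $I$.
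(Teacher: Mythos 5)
Your reduction to $\ind_\phi(p)=\htr_{\delta\phi}(\ch P)=\Tr_{\delta\phi}(\ch P)$ is correct and is essentially the same starting point as the paper's: you invoke Theorem~\ref{prop-hio} and lift $\ch p$ to the genuine $(b+B)$-cycle $\ch P$, while the paper reads the same thing off Definition~\ref{def-hip} after noting $\ch U(0)=\ch U(1)=0$ and (implicitly) using the Lemma $\Tch(U,\dot U)=2\pi i\,\ch P$. These routes coincide, and the degree bookkeeping that isolates the degree-$2q$ component $(-1)^q\tfrac{(2q)!}{q!}(P-\tfrac12)\otimes P^{\otimes 2q}$ is also correct.

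Where your argument is incomplete is precisely at the point you flag. Your face-by-face expansion of $\delta\phi$ correctly yields $\Tr_{\delta\phi}(P-\tfrac12,P,\ldots,P)=T_0-T_{\mathrm{mid}}=\tfrac12\Tr\big(\sum_i f^i_0 P f^i_1\cdots P f^i_{2q-1}\big)$, so the whole proof hinges on the unproved identity
\[
\Tr\Big(\sum_i f^i_0\,P f^i_1\cdots P f^i_{2q-1}\Big)=2\,\Tr\Big(\sum_i P f^i_0\,P f^i_1\cdots P f^i_{2q-1}\Big),
\]
equivalently $\Tr\big(\sum_i P^{\perp} f^i_0 P f^i_1\cdots P f^i_{2q-1}\big)=\Tr\big(\sum_i P f^i_0 P f^i_1\cdots P f^i_{2q-1}\big)$. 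You describe what such a proof might involve but do not carry it out, so as written this is a genuine gap. Two further technical caveats you gloss over: (a) the individual faces $\partial_j\phi$ of $\delta\phi$ are not locally zero, so when you pass from $\htr_{\delta\phi}$ to a sum of $\Tr_{\partial_j\phi}$-terms you are implicitly tracing operators that need not be trace class; your $j=2q$ manipulation uses cyclicity, which $\htr$ does not enjoy, and the failure is controlled by $\htr[A,B]=\Res(A[\log R,B])$ — one must check those residue corrections actually cancel or vanish (they do, by the jet-vanishing of $\delta\phi$ and the antisymmetry of $\phi$, but this needs saying); (b) you take for granted that $\sum_i T_{f^i_0}\cdots T_{f^i_{2q-1}}$ is trace class for an arbitrary Alexander--Spanier cocycle $\phi$, which is also a nontrivial point underlying the statement. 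For what it is worth, the paper's own proof of this proposition is quite terse and its bookkeeping is organized differently (it describes the cochain's factors as ``$\Id-P$ except one $2\pi iP$'' and asserts that ``only the first term of $\delta\phi$ contributes''), which does not line up term-by-term with your $(P-\tfrac12)\otimes P^{\otimes 2q}$ expansion; so you have not merely reproduced the paper's sketch, but you have replaced one unjustified combinatorial claim by another.
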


\begin{proof}
Since $P$ is a projection, $\ch U(1)= \ch (\Id) =0$, and so
\begin{align*}
2 \pi i \cdot \ind_{\phi} (p) \, = \,   \Tr_{\delta \phi} \big( \Tch_{2q}(U , \dot{U}) \big) .
\end{align*}
For later use, let us record that
\begin{align} \label{odd-cocy}
(b+B)\left(\Tch (U , \dot{U}) \right)\, = \,  0  ,
\end{align}
which makes $\Tch (U , \dot{U}) $ a
cocycle in $CC^{per}_\bullet (\Psi^0(M))$.

Apart from the overall numerical factor, each component of the cocycle
$\Tch_{2q}(U , \dot{U})$ has all the factors equal to $\Id - P$, except
one which is $U^{-1}\dot{U} = 2\pi i P$. After obvious cancelations due
to antisymmetrization, a closer look reveals that
the contribution to the pairing of the last $2q$ terms in the expression of
$\delta \phi$ is nil, because two of the summands vanish and the rest of them
successively cancel each other.
The only contribution comes from the the first term of $\delta \phi$, i.e. from
 $ \sum_i 1 \otimes f^i_0 \otimes f^i_1 \otimes \ldots \otimes f^i_{2q-1}$
and it yields the claimed formula.
\end{proof}

\subsection{Suspended indices (the odd case)}
In~\cite[\S3]{MW1993} (cf. also \cite[\S4]{MW1994}) the pairing of selfadjoint elliptic operators
with odd-dimensional Alexander--Spanier
cohomology has been handled via suspension, by means of a suspended Chern character.
We proceed to prove that the corresponding higher indices
coincide with those we have defined above.

Let $D$ be a self-adjoint elliptic operator. By adding a scalar multiple of the
identity, one can assume that $D$ is invertible.
Let $F \in \Psi^0(M)$ be a selfadjoint operator such that
$F - \frac{D}{|D|} \in \Psi^{-\infty}(M)$, and whose
Schwarz kernel of is supported in a sufficiently small neighborhood of
the diagonal, to be specified later. (For the construction of such operators
$F$, see \cite[Lemma 4.1]{MW1994}).

Consider then the loop of elliptic operators:
\begin{align} \label{loopdow}
\begin{split}
&\Phi(\theta)= \left\{\begin{matrix}  \cos \theta \cdot I + i \sin \theta \cdot F ,
& \qquad 0 \leq \theta \leq \pi ,\\ &&\\
( \cos \theta  + i \sin \theta)\cdot I , & \qquad \pi \leq \theta \leq 2\pi .
\end{matrix}\right.
\end{split}
\end{align}
Then
\begin{align} 
\begin{split}
&\Psi(\theta)= \Phi(\theta)^*= \left\{\begin{matrix}  \cos \theta \cdot I - i \sin \theta \cdot F ,
& \qquad 0 \leq \theta \leq \pi ,\\ &&\\
( \cos \theta  - i \sin \theta)\cdot I , & \qquad \pi \leq \theta \leq 2\pi .
\end{matrix}\right.
\end{split}
\end{align}
is a loop of parametrices of $\Phi(\theta)$.
Out of them one forms the loop of idempotents
\begin{equation*}
E_\theta=V_\theta \begin{bmatrix} 1 & 0\\ 0 &0 \end{bmatrix}V_\theta ^{-1}
=\begin{bmatrix} S(\theta)^2 & S(\theta)(1+S(\theta))\Psi(\theta)   \\
S(\theta)\Phi(\theta) &1-S(\theta)^2 \end{bmatrix}, \ 0 \le \theta \le 2 \pi.
\end{equation*}
where $S(\theta)= 1-\Phi(\theta)\Psi(\theta)= 1- \Psi(\theta) \Phi(\theta)$ and
\begin{equation*}
V_\theta  =\begin{bmatrix} S(\theta) & -(1+S(\theta))\Psi(\theta)  \\ \Phi(\theta) &S(\theta) \end{bmatrix}
\end{equation*}
The loop $E= \{E_\theta ; 0 \le \theta \le 2 \pi \}$ represents an element in
$K_0(S \Psi^0 (M))$, and its
suspended Chern character, as defined in~\cite[\S 1.5]{MW1993}, is given by
the cycle
\begin{align} \label{Sch}
\Sch (E):=  \Tch (E, E') =  \int_{0}^{2\pi}  \slch \left(E _\theta, \frac{dE}{d\theta}\right)  d\theta;
\end{align}
$(b+B) \Sch (E) = 0$, since $E_0=E_{2 \pi} = \begin{bmatrix} 1 & 0\\ 0 &0 \end{bmatrix}$,
so $\Sch (E)$ is indeed a cycle
in the cyclic homology bicomplex of  $\Psi^0 (M)$.

If now $\phi\in C_\wedge^{2q+1}(M)$ is an
Alexander--Spanier cocycle, we define the corresponding
{\em suspended index} by the formula
\begin{equation} \label{Sind}
\sind_\phi(D):=\frac{1}{2 \pi i} \Tr_{\phi} \left(\Sch (E)\right) ;
\end{equation}
since $S(\theta)$ are smoothing operators, the the right hand side is well-defined.
 For a fixed cocyle $\phi$ the definition does not depend on the choice of $F$ as above,
 as long as its support is sufficiently localized around the diagonal.

\begin{thm} \label{coincide}
Let $D$ be a selfadjoint elliptic operator and
let $\phi \in C_\wedge^{2q+1}(M)$ be an Alexander--Spanier cocycle. Then
\begin{equation*}
\sind_\phi(D) = \ind_\phi(p) ,
\end{equation*}
where $p$ is the symbol of the orthogonal projection onto the positive spectrum of $D$.
\end{thm}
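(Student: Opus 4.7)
The plan is to reduce the theorem to Theorem \ref{prop-hio}, which gives $\ind_\phi(p) = \chi_\phi(\ch p)$. It then suffices to establish
\[
\Tr_\phi(\Sch(E)) \, = \, 2\pi i \, \chi_\phi(\ch p) .
\]
The starting observation is that since $\sigma(E_\theta) = \begin{bmatrix} 0 & 0 \\ 0 & 1 \end{bmatrix}$ is independent of $\theta$, the derivative $\dot E_\theta$ is smoothing; in particular every component of the cycle $\Sch(E)$ contains a smoothing factor, so $\Sch(E)$ actually lies in the relative subcomplex $CC^{per}_{odd}(\Psi^0(M), \cJ^0(M))$ and the notation $\Tr_\phi(\Sch(E))$ is unambiguous without any $\htr$-regularization.

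Next I would choose the self-adjoint lift $P = (I+F)/2 \in \Psi^0(M)$ of $p$, and form the exponential loop $U(t) = e^{2\pi i t P}$ used in Definition \ref{def-hip}. The relation between $\Phi$ and $U$ is as follows: on $[0,\pi]$ one has $e^{i\theta F} = e^{-i\theta}U(\theta/\pi)$ whenever $F^2 = I$, which by construction of $F$ holds modulo smoothing operators; on $[\pi, 2\pi]$ the loop $\Phi(\theta) = e^{i\theta}$ is a pure scalar loop whose contribution vanishes under the antisymmetric pairing with $\phi$. Using this together with a pseudodifferential adaptation of the argument of Lemma \ref{lemmachrodd}, I would construct an explicit chain $\beta \in CC^{per}_{ev}(\Psi^0(M))$ together with a secondary transgression showing that the pair $(\Sch(E), \beta)$ is a cocycle in the cone complex $\cone[CC^{per}_\bullet(\Psi^0(M), \cJ^0(M)) \to CC^{per}_\bullet(\Psi^0(M))]$ cohomologous to $2\pi i \cdot \chr(p) = (-\ch U(1), \Tch(U, \dot U))$.

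Finally, I would pair both cone cocycles with the cone cochain representing $\chi_\phi$ under the isomorphism $HC^\bullet(\cS^0(M)) \cong HC^\bullet(\Psi^0(M) : \cJ^0(M))$, essentially given by the pair $(\htr_\phi, \htr_{\delta\phi})$. Evaluating on $\chr(p)$ reproduces $\chi_\phi(\ch p)$ by the argument already present in the proof of Theorem \ref{prop-hio}, while evaluating on $(\Sch(E), \beta)$ yields $(2\pi i)^{-1}\Tr_\phi(\Sch(E))$ once one checks that the auxiliary term $\htr_{\delta\phi}(\beta)$ vanishes. Cohomological invariance of the pairing then gives the identity $\sind_\phi(D) = \ind_\phi(p)$. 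The main obstacle is the explicit determination of $\beta$ and the verification that its contribution cancels out. A more abstract route that sidesteps this calculation is to appeal to the naturality of the Chern character under the connecting morphism $\partial \colon K_0(\cS^0(M)) \to K_1(\cJ^0(M))$: one identifies $(2\pi i)^{-1}\Sch(E)$ as a representative of $\ch \partial [p] \in HC^{per}_{odd}(\cJ^0(M))$, and uses that the dual boundary sends the cochain $\Tr_\phi$ on $\cJ^0(M)$ to $\chi_\phi$ on $\cS^0(M)$, whence $\Tr_\phi(\Sch(E)) = 2\pi i\, \Tr_\phi(\ch \partial[p]) = 2\pi i\, \chi_\phi(\ch p)$ as required.
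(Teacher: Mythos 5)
Your overall plan---reducing to Theorem~\ref{prop-hio} and establishing $\Tr_\phi(\Sch(E)) = 2\pi i\,\chi_\phi(\ch p)$---is the right strategy, and your observations that $\Sch(E)$ lies in the relative subcomplex and that the arc $\pi \le \theta \le 2\pi$ contributes nothing (since $E_\theta$ is constant there) are both correct. But both routes you sketch have substantive gaps.

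Concerning Route~1: the paper's argument is significantly more economical than what you propose. It imports the explicit secondary transgression chain $\TSch(\tilde{E}, \dot{\tilde{E}})$ from Moscovici--Wu, which satisfies $(b+B)\TSch(\tilde{E}, \dot{\tilde{E}}) = -\Sch(E)$, and then verifies by a direct calculation that its complete symbol is $\sigma\bigl(\TSch(\tilde{E}, \dot{\tilde{E}})\bigr) = 2\pi i\,\ch(p)$. Plugging this single lift into the defining identity $\chi_\phi = \htr_{\delta\phi} - (b+B)\htr_\phi$ immediately gives $2\pi i\,\chi_\phi(\ch p) = \Tr_{\delta\phi}(\TSch) + \Tr_\phi(\Sch(E))$, with no need to compare cone cocycles. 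Your $\beta$ is morally $\TSch$, but after constructing it you would \emph{also} have to prove the cone-complex cohomological equivalence $(\Sch(E),\beta)\sim 2\pi i\,\chr(p)$, which is a much heavier secondary transgression than the one-line symbol computation the paper actually uses. You flag the construction of $\beta$ as ``the main obstacle,'' but in truth it is essentially the whole proof, and the cone comparison you tack on afterwards is superfluous.

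Route~2 contains a genuine error. You propose to apply the connecting map $\partial^*\colon HC^\bullet(\cJ^0(M)) \to HC^{\bullet+1}(\cS^0(M))$ to $\Tr_\phi$ and assert $\partial^*[\Tr_\phi] = [\chi_\phi]$. But $\Tr_\phi$ restricted to $\cJ^0(M)$ is \emph{not} a cyclic cocycle: its Hochschild coboundary is $\Tr_{\delta\phi}$, and although $\delta\phi$ vanishes in a neighborhood of the iterated diagonal, trace class operators in $\Psi^0(M)$ have Schwartz kernels supported on all of $M\times M$, so $\Tr_{\delta\phi}$ is nonzero on $\cJ^0(M)$. Hence $[\Tr_\phi]$ does not define a class in $HC^\bullet(\cJ^0(M))$ and the ``dual boundary'' is not applicable. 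Relatedly, both routes underplay the crucial localization of $F$: the vanishing of the correction term $\Tr_{\delta\phi}(\TSch)$ is not a cohomological consequence but requires choosing $F$ with Schwartz kernel supported in a neighborhood of the diagonal small enough relative to the vanishing locus of $\delta\phi$. This localization, which is exactly the freedom built into the definition of $\sind_\phi(D)$, is what converts a cohomological statement into the required equality of numbers.
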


\begin{proof}
The loop $E \oplus 0$ can be retracted to a constant loop
via a family of loops $[0,1] \ni t \mapsto \tilde{E}(t)$, with $\tilde{E}(0) = E$
and $\tilde{E}(1) = e$, where
\begin{equation*}
\tilde{E}_\theta(t):= \begin{bmatrix} V_\theta &0 \\ 0 &1 \end{bmatrix}\exp\left(\frac{\pi}{2} tJ\right)
 \begin{bmatrix} e &0 \\ 0 &0 \end{bmatrix} \exp\left(-\frac{\pi}{2} tJ\right)
  \begin{bmatrix} V_\theta &0 \\ 0 &1 \end{bmatrix}^{-1},
\end{equation*}
and as before,
\begin{equation*}
e = \begin{bmatrix} 1 &0 \\ 0 &0 \end{bmatrix}, \ J = \begin{bmatrix} 0 & -1 \\ 1 &0 \end{bmatrix}.
\end{equation*}
In order to show that the suspended Chern character is well-defined on $K$-theory,
Moscovici and Wu gave a secondary transgression formula between two homotopic
loops. Specialized to our situation, \cite[Proposition 1.15]{MW1993}  provides
an explicit chain $\TSch(\tilde{E}, \dot{\tilde{E}})$ in the cyclic homology bicomplex
of  $\Psi^0 (M)$ satisfying
\begin{equation}  \label{Strans}
- \Sch (E) \equiv \Sch(E_\theta(1) )- \Sch (E_\theta(0)) =
(b+B)\left(\TSch(\tilde{E}, \dot{\tilde{E}})\right).
\end{equation}

Using the explicit expression of  $\TSch(\tilde{E}, \dot{\tilde{E}})$,  one verifies
by a straightforward calculation that
 \begin{equation*}
  \sigma\left(\TSch(\tilde{E}, \dot{\tilde{E}})\right)= 2 \pi i \ch (p) ,
  \end{equation*}
 where $p = (1+\sigma(F))/2$.
  From this and the transgression formula \eqref{Strans}, by
  the very definition \eqref{chi-map} of the map $\chi_\phi$, we obtain
\begin{align*}
2 \pi i  \chi_\phi(\ch p) = \Tr_{\delta\phi} \left(\TSch(\tilde{E}, \dot{\tilde{E}})\right) +
\Tr_\phi \left(\Sch (E)\right).
\end{align*}
By choosing $F$ with Schwartz kernel supported
 in a sufficiently small neighborhood of diagonal, depending on the vanishing locus
 of $\delta\phi$,
 one can ensure that the first term of the above sum vanishes, and therefore
 \begin{align*}
 \chi_\phi(\ch p) = \frac{1}{2 \pi i}
\Tr_\phi \left(\Sch (E)\right) = \sind_\phi(D) .
\end{align*}
In view of Theorem \ref{prop-hio} this completes the proof.
\end{proof}

\section{Cohomological formulas for higher indices} \label{CoFo}
As explained in the remarks \ref{descent1} and \ref{descent0},
the index pairing between the $K$-theory groups of the symbol algebra
$\cs^0(M)$  and Alexander--Spanier cohomology of $M$
is completely determined by the higher analytic indices of elliptic operators,
which in turn depend only on their principal symbols. This allows to reduce
the cohomological computation of the higher index pairing
to finding explicit cohomological expressions
for higher indices of Dirac-type operators. The latter have already been
established, cf.~\cite[\S 3]{CM1990} for the even case, while in the odd case
similar formulas can be easily derived for the suspended version of higher indices
from the results in~\cite[\S 3]{MW1993}. In view of Theorem \ref{coincide} it only
remains to combine the formulas for higher analytic indices obtained in the previous
section with the corresponding cohomological expressions.

In the even case the cohomological formula for the index of an elliptic operator $D$ is
\[
\ind_\phi(u) = \frac{(-1)^q}{(2\pi i)^q} \int_{M} \pi_*(\ch (\sigma_{pr}(D))) \wedge \Td(TM \otimes \C)\wedge \left( \sum_i f_0^idf_1^i \ldots df_{2q}^i \right);
\]
here $u=\sigma (D)$,
$\sigma_{pr}(D) \in C^\infty(S^*M)$ is its principal symbol, $\pi \colon S^*M \to M$ is the canonical projection, and $\Td(TM \otimes \C)$ is the Todd class of the complexified tangent bundle.

From~\cite[Theorem (3.9)]{CM1990} and Proposition \ref{prop-hii2} we obtain:

\begin{thm} \label{genhh-ev}
 Let $D \in M_N(\Psi(M))$ be an invertible elliptic operator. Set $S_f:= D^{-1} [ D, f] = f - D^{-1} f D$.
 If $\phi =  \sum_i f^i_0 \otimes f^i_1\otimes \ldots \otimes f^i_{2q} \in C_\wedge^{2q}(M)$
is an Alexander--Spanier cocycle, then
\begin{multline}\label{anticomm-ev}
  \Tr \left(\sum_i
 S_{f^i_0} S_{f^i_1}  \ldots S_{f^i_{2q}}\right) = \\ \frac{1}{(2\pi i)^q q!} \int_{M}
  \pi_*(\ch (\sigma_{pr}(D)))
 \wedge \Td(TM \otimes \C)\wedge \left( \sum_i f_0^idf_1^i \ldots df_{2q}^i \right).
\end{multline}
\end{thm}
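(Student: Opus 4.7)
The proof amounts to combining two formulas for the same quantity $\ind_\phi(u)$, one analytic and one topological, so the plan is essentially algebraic bookkeeping.

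First, I would invoke Proposition \ref{prop-hii2}, which expresses the higher analytic index of $u = \sigma(D)$ directly in terms of the operators $S_f = D^{-1}[D,f]$:
\begin{equation*}
\ind_\phi(u) = (-1)^q q! \, \Tr\!\left(\sum_i S_{f^i_0} S_{f^i_1} \cdots S_{f^i_{2q}}\right).
\end{equation*}
This handles the left-hand side of \eqref{anticomm-ev}; solving for the trace simply divides by $(-1)^q q!$.

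Second, I would substitute the cohomological expression for $\ind_\phi(u)$ recalled at the start of the section,
\begin{equation*}
\ind_\phi(u) = \frac{(-1)^q}{(2\pi i)^q} \int_M \pi_*(\ch(\sigma_{pr}(D))) \wedge \Td(TM \otimes \C) \wedge \sum_i f_0^i df_1^i \cdots df_{2q}^i,
\end{equation*}
which, as explained in the paragraph just preceding Theorem \ref{genhh-ev}, follows by combining Connes--Moscovici's cohomological index formula \cite[Theorem (3.9)]{CM1990} (originally stated for Dirac-type operators) with the descent of the index pairing to the principal symbol via Remark \ref{descent1}. Dividing this by $(-1)^q q!$ produces precisely $\frac{1}{(2\pi i)^q q!}$ times the integral, since the signs $(-1)^q \cdot (-1)^q = 1$ cancel.

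The only subtle point is the last invocation: one must be sure that the cohomological formula from \cite{CM1990} applies to the present arbitrary invertible elliptic $D$ (rather than just to Dirac operators as originally proved). This is justified by the fact that both sides of the cohomological index formula depend only on the class of the principal symbol $\sigma_{pr}(D)$ in $K^1(S^*M)/\pi^*K^1(M)$, and the Dirac-type case spans this quotient rationally by the Atiyah--Singer theorem; this is exactly the content of Remark \ref{descent1} combined with Theorem \ref{prop-hii}. Once this reduction is in place, equating the two expressions for $\ind_\phi(u)$ yields \eqref{anticomm-ev} immediately, and there are no further computations to perform.
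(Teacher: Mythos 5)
Your proposal is correct and follows the paper's own argument: the paper likewise obtains \eqref{anticomm-ev} by equating the analytic expression for $\ind_\phi(u)$ from Proposition \ref{prop-hii2} with the cohomological formula recalled at the start of Section \ref{CoFo} (itself derived from \cite[Theorem (3.9)]{CM1990} via the descent of Remark \ref{descent1}). Your additional paragraph on why the Connes--Moscovici formula extends from Dirac-type to arbitrary invertible elliptic $D$ is exactly the content of the paper's opening paragraph of Section \ref{CoFo}, so it is welcome elaboration rather than a departure.
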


In the odd case the cohomological formula for the index of a selfadjoint elliptic operator $D$ is
\[
\ind_\phi(p) = \frac{(-1)^q}{(2\pi i)^q} \int_{M} \pi_*(\ch (\sigma_{pr}(p))) \wedge \Td(TM \otimes \C)\wedge \left( \sum_i f_0^idf_1^i \ldots df_{2q-1}^i \right);
\]
here $p=\sigma (P)$, where $P$ is the orthogonal projection on the positive spectrum of $D$.

For a Dirac-type operator $D$ the cohomological expression of $\sind_\phi (D)$ can be computed
by means of the trace formula for the inverse Laplace transform~\cite[Proposition 3.6]{MW1994}
and using Getzler's asymptotic calculus, along the lines invoked
in proving ~\cite[Theorem 4.2]{MW1994}. The calculation per se mirrors the one
performed in~\cite[\S3]{MW1993},
 and yields the right hand side of the above formula.
 Combining this with Theorem \ref{coincide} and
Proposition \ref{prop-hiidem2} we obtain:

\begin{thm}\label{genhh-odd}
  Let $D$ be a   selfadjoint elliptic pseudodifferential operator,
 $P$ the projection onto its positive spectrum. Set $T_f:= P f P$.
If $\phi =  \sum_i
f^i_0 \otimes f^i_1\otimes \ldots \otimes f^i_{2q-1} \in C_\wedge^{2q-1}(M)$
is an Alexander--Spanier cocycle, then
\begin{multline}\label{anticomm-odd}
  \Tr \left(\sum_i
T_{f^i_0} T_{f^i_1}  \ldots T_{f^i_{2q-1}}\right) = \\ \frac{q!}{(2\pi i)^q (2q)!} \int_{M}
\pi_*(\ch (\sigma_{pr}(P))) \wedge  \Td(TM \otimes \C)\wedge
\left(  \sum_i  f_0^idf_1^i \ldots df_{2q-1}^i \right).
\end{multline}
\end{thm}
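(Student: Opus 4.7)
The plan is to follow the three-step reduction already sketched in the paragraph preceding the statement: first translate the trace of multicommutators of Toeplitz operators on the left-hand side into the higher analytic index $\ind_\phi(p)$ via Proposition \ref{prop-hiidem2}; second use Theorem \ref{coincide} to replace $\ind_\phi(p)$ with the suspended index $\sind_\phi(D)$, where $D$ is the self-adjoint elliptic $\Psi$DO whose positive spectral projection $P$ has principal symbol $p$; third evaluate $\sind_\phi(D)$ cohomologically. The numerical bookkeeping between the two reformulations will give
\[
\Tr\Bigl(\sum_i T_{f_0^i}\cdots T_{f_{2q-1}^i}\Bigr) \;=\; \frac{(-1)^q q!}{(2q)!}\,\ind_\phi(p) ,
\]
so the theorem follows once the cohomological formula for $\ind_\phi(p)$ stated just before it is established with the normalization $(-1)^q/(2\pi i)^q$.

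For the cohomological computation I would first invoke Remark \ref{descent0}: because $\ind_\phi$ descends to $K^0(S^*M)/\pi^*(K^1(M))$, and any such class is representable by the principal-symbol class of a Dirac-type operator twisted by some vector bundle $E \to M$, it suffices to prove the formula when $D$ is a twisted Dirac operator $\Dirac_E$ on a spin (or spin$^c$) manifold. Here the advantage is that the Getzler symbolic calculus applies directly, and the Todd class is generated as the index density of a Dirac operator, so $\pi_*(\ch(\sigma_{pr}(P))) \wedge \Td(TM \otimes \C)$ will emerge naturally.

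For the heat-operator implementation, I would write $\sind_\phi(D)$ as a limit of the supertrace-type expression built from the suspended Chern character $\Sch(E)$ in \eqref{Sch}, and use the inverse Laplace transform formula \cite[Proposition 3.6]{MW1994} to convert each component of $\slch(E_\theta, dE_\theta/d\theta)$ into an integral of traces $\Tr(A_0 e^{-s_0 D^2}A_1 e^{-s_1 D^2}\cdots)$ against a simplex in the $s_i$'s. Getzler rescaling on the algebra of heat-kernel-valued symbols then replaces each factor by a Clifford-algebra differential form, the Gaussian integrations in the $s_i$'s produce simplex volumes responsible for the factor $q!/(2q)!$, and collapsing the simplex of insertions yields the product $f_0^i \, df_1^i \wedge \cdots \wedge df_{2q-1}^i$. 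Tracking the Clifford-symbol computation gives $\hat{A}$-genus of $TM$ times $\ch(E/S)$ on the diagonal, which repackages via the standard identity into $\pi_*(\ch \sigma_{pr}(P)) \wedge \Td(TM\otimes \C)$ after integration over the cotangent sphere.

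The main obstacle is the bookkeeping of constants: one has to track the factor $1/(2\pi i)$ from the definition \eqref{Sind} of $\sind$, the factor $(-1)^q (2q)!/q!$ from Proposition \ref{prop-hiidem2}, the Beta-function value $B(q+1,q+1) = (q!)^2/(2q+1)!$ coming from the simplex integrals analogous to those in the proof of Lemma \ref{lemmachrodd}, and the $(2\pi)^{-\dim M}$ appearing in the Wodzicki residue \eqref{Wres}, and verify that they collapse to the single prefactor $(-1)^q/(2\pi i)^q$ on the index side. This is essentially the content of the parallel calculation carried out in \cite[\S 3]{MW1993}, so the cleanest strategy is to quote their computation for the Dirac model case and then extend to general elliptic $D$ by the $K$-theoretic descent, rather than re-deriving the Getzler asymptotics in detail.
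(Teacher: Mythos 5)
Your proposal follows essentially the same route as the paper: combine Proposition \ref{prop-hiidem2} and Theorem \ref{coincide} to rewrite the Toeplitz trace as $\frac{(-1)^q q!}{(2q)!}\sind_\phi(D)$, then invoke the $K$-theoretic descent (Remark \ref{descent0}) to reduce to Dirac-type operators and quote the Getzler/inverse-Laplace-transform computation of \cite{MW1993, MW1994} for the cohomological expression of $\sind_\phi(D)$, which is exactly what the paper does. The numerical bookkeeping you give is correct and matches the prefactor $q!/\bigl((2\pi i)^q (2q)!\bigr)$ in \eqref{anticomm-odd}.
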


 The identities \eqref{anticomm-ev} and \eqref{anticomm-odd} generalize the Helton--Howe
 trace formula for the top multi-commutator of Toeplitz operators~\cite[Theorem 7.2]{HH1975}.
 The latter corresponds to the case when $M$ is the cosphere bundle $S^*N$  of another
 manifold $N$, and $D$ is the canonical Spin$^c$-Dirac operator.
 However, since the Helton--Howe result
 applies to arbitrary cochains $\phi \in C^{m}(M)$, $m=\dim M$, we need
 to explain how to modify the complex so that all the top-dimensional Alexander--Spanier
 cochains qualify as cocycles.

Recall that the Alexander--Spanier complex
we used in this paper was defined as the quotient
complex \, $C_{AS}^\bullet(M) =\left\{C^\bullet_\wedge (M)/C_{\wedge, 0}^\bullet(M), \delta \right\}$, \, where
$C^\bullet_\wedge(M)$ consists of smooth indecomposable totally antisymmetric cochains, and
 and $C_{\wedge,0}^q(M)$ consists of those $\phi \in C^\bullet_\wedge(M)$ which vanish in a neighborhood
of the iterated diagonal $\Delta_{q+1}M$.
We can modify this definition by taking instead the quotient by
the larger subcomplex
 \[
 \bar{C}_{\wedge,0}^q(M) :=\left\{\phi \in C^q_\wedge(M) \mid \text{the $m$-th jet of
 $\phi$ vanishes at $\Delta_{q+1}M$} \right\}
 \]
where, recall, $m= \dim M$. It is easy to see that  $ \bar{C}_{\wedge,0}^q(M)$ is indeed a subcomplex. Set
\[
\bar{\bf C}_{AS}^\bullet(M) := \left\{C^\bullet_\wedge (M)/\bar{C}_{\wedge, 0}^\bullet(M), \delta \right\}
\]
We note that, due to the total antisymmetry,
\[
\bar{\bf C}_{AS}^q(M) =0 \text{ for } q>m.
\]
It is also easy to see that the canonical morphism to de Rham complex
\[
\lambda \colon \bar{\bf C}_{AS}^\bullet(M) \to \Omega^\bullet (M)
\]
given by
\[
\lambda( \sum_i f_0^i \otimes f_1^i \otimes \ldots \otimes f_q^i) := \sum_i f_0^i df_1^i  \ldots df_q^i
\]
is well defined and surjective.

Due to the total antisymmetry of a chain $\phi \in C^m_\wedge(M)$,  its $(m-1)$-jet
 vanishes at $\Delta_{m+1}M$;  it is easy to see then that
\[
\lambda \colon \bar{\bf C}_{AS}^m(M) \to \Omega^m (M)
\]
is an isomorphism of vector spaces.
The surjectivity of $\lambda$ implies that the cohomology of the complex
$\bar{\bf C}_{AS}^\bullet (M)$ in degree $m$ coincides with $H^m(M)$. Hence the canonical projection
\[
 \mathbf{C}_{AS}^{\bullet}(M) \to \bar{\bf C}_{AS}^{\bullet}(M)
\]
induces an isomorphism in degree $m$ cohomlogy.  Equivalently, every cohomology class of $\bar{\bf C}_{AS}^\bullet (M)$ of degree $m$
has a representative $\phi \in C^m_\wedge (M)$ such that $\delta \phi \in  C^{m+1}_{\wedge, 0} (M)$.

The construction of the map $\chi$ extends without difficulty to the new complex,
provided that the target is the cyclic complex of  $0$-order complete symbols, thus
yielding the map of complexes
\[
\chi \colon \bar{\bf C}^\bullet_{ AS}(M) \rightarrow CC^{\bullet+1}(\cs^0(M)).
\]
Indeed, Lemma \ref{cyclic} remains true when
 $\phi \in \bar{C}_{\wedge,0}^k(M)$ and
 $A_0, \ldots, A_k\in \Psi^0(M)$, because the vanishing of the
$m$-th jet of $\phi$ at $\Delta_{k+1}M$ guarantees that
$\sum_i A_0 f_0^i A_1 f_1^i \ldots A_k [\log R, f_k^i] \in \Psi^{-m-1}(M)$
and therefore, by equation \eqref{Wres},
 $ \Res \sum_i A_0 f_0^i A_1 f_1^i \ldots A_k [\log R, f_k^i] = 0$.
 Thus, the proof of Proposition \ref{post-cyclic} remains unchanged, provided
 that $\chi_\phi$ is applied to $0$-order pseudodifferential operators.

We finally note that the proofs of the identities
\eqref{anticomm-ev} and \eqref{anticomm-odd}
effectively involve only pseudodifferential operators of order  $0$.
 Since all the top degree cohomology classes in the new complex
 $\bar{\bf C}_{AS}^\bullet(M)$ can be represented by cocycles in ${\bf C}_{AS}^\bullet(M)$, the identities \eqref{anticomm-ev} and \eqref{anticomm-odd}
remain true for all the top degree cocycles (i.e. all degree $m$ cochains) in the new complex
$\bar{\bf C}_{AS}^\bullet(M)$. Recalling the notation
\[
\left[ A_1, A_2, \ldots, A_k\right]:= \frac{1}{k!}\sum_{\tau \in S_k} \sgn \tau A_{\tau(1)} A_{\tau(2)} \ldots A_{\tau(k)} ,
\]
we can therefore conclude that the following extensions of the Helton--Howe formula hold true.

 \begin{cor}
Let $M$ be a connected, closed, smooth manifold of even dimension
$m=2q$. Let $D \in M_N(\Psi^0(M))$ be an invertible elliptic operator.
 Then,  for any $f_0$, $f_1$, \ldots, $f_{2q} \in C^\infty(M)$
\begin{align*}
  \Tr \left[
S_{f_0}, S_{f_1},  \ldots, S_{f_{2q}}\right] = \kappa \frac{(2q+1)!}{(2\pi i)^q q!} \int_{M}  f_0df_1 \ldots df_{2q} ,
\end{align*}
where $\kappa$ is the component of $ \pi_*(\ch(\sigma_{pr}(D))) $ in $H^0(M, \mathbb{Q})$, which is canonically identified with $\mathbb{Q}$.

Also, for any $f_0$, $f_1$, \ldots, $f_{2r} \in C^\infty(M)$ with $r > q$,
\begin{align*}
  \Tr \left[
S_{f_0}, S_{f_1},  \ldots, S_{f_{2r}}\right] = 0 .
\end{align*}
\end{cor}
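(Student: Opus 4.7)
My plan is to apply the extended form of identity \eqref{anticomm-ev}, valid for top-degree cocycles of the enlarged complex $\bar{\bf C}_{AS}^\bullet(M)$, to the totally antisymmetrized cochain
\[
\phi \, := \, \sum_{\tau \in S_{2q+1}} \sgn(\tau)\, f_{\tau(0)} \otimes f_{\tau(1)} \otimes \cdots \otimes f_{\tau(2q)} \in C_\wedge^{2q}(M) .
\]
Because $2q = m = \dim M$, the paper's observation gives $\bar{\bf C}_{AS}^{2q+1}(M) = 0$, so $\delta \phi$ vanishes in the quotient and $\phi$ represents a cocycle in $\bar{\bf C}_{AS}^{2q}(M)$.

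Next I would unpack the two sides of \eqref{anticomm-ev} applied to $\phi$. On the left, cyclicity of the trace combined with the definition of the antisymmetrization bracket yields a numerical multiple of $\Tr[S_{f_0}, \ldots, S_{f_{2q}}]$. On the right, the form-valued factor $\sum_\tau \sgn(\tau)\, f_{\tau(0)}\, df_{\tau(1)} \wedge \cdots \wedge df_{\tau(2q)}$ is a top form on the $2q$-dimensional manifold $M$, so among the characteristic class factors only the degree-zero components survive the wedge product. Since $\Td(TM\otimes \C)^{(0)} = 1$, only $\kappa \in H^0(M, \mathbb{Q})$ is extracted from $\pi_*(\ch(\sigma_{pr}(D)))$. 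The remaining de Rham manipulation uses that permutations of the $df$-factors each contribute a copy of the same form --- giving a factor of $(2q)!$ --- together with the integration-by-parts identity $\int_M f_k\, df_0 \cdots \widehat{df_k} \cdots df_{2q} = (-1)^k \int_M f_0\, df_1 \cdots df_{2q}$ valid on the closed manifold, to yield $(2q+1)! \int_M f_0\, df_1 \cdots df_{2q}$. Reassembling these factors produces the first identity.

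For the vanishing statement when $r > q$, I would repeat the construction with $2r$ in place of $2q$: the antisymmetrized cochain sits in $C_\wedge^{2r}(M)$, and since $\bar{\bf C}_{AS}^{2r+1}(M) = 0$ it is still a cocycle of the extended complex. The identity \eqref{anticomm-ev} then asserts that an appropriate multiple of $\Tr[S_{f_0}, \ldots, S_{f_{2r}}]$ equals an integral over $M$ of a $2r$-form, which is identically zero as an element of $\Omega^{2r}(M) = 0$ since $2r > \dim M$. Hence the bracket vanishes.

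The most delicate step I anticipate is the combinatorial bookkeeping: tracking the interplay between the antisymmetrization normalization convention, the $(2q)!$ multiplicity from permuting wedge factors, and the $(2q+1)$-fold contribution from the iterated Stokes reductions, so that the right-hand normalization $\kappa\,(2q+1)!/(2\pi i)^q q!$ emerges exactly. A secondary point that needs to be checked is that the extended version of \eqref{anticomm-ev} is valid in all cohomological degrees $\geq m$, not merely in the top degree emphasized in the text preceding the corollary.
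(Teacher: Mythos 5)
Your strategy -- antisymmetrize $f_0 \otimes \cdots \otimes f_{2q}$, view the result as a top-degree cocycle in $\bar{\bf C}_{AS}^\bullet(M)$, feed it into the extended form of \eqref{anticomm-ev}, and let degree reasons kill everything but the $H^0$ part of the characteristic classes -- is exactly the paper's implicit route; the paper offers no separate proof of this corollary, so this is the intended argument. Your treatment of the second assertion (for $r>q$ the integrand is a $2r$-form on an $m$-dimensional manifold, hence zero) is clean and correct, and you are right to flag that one must check the extended identity applies in all degrees $\geq m$, not only degree $m$; this follows because for $k>m$ every $\phi\in C^k_\wedge(M)$ has vanishing $m$-jet on $\Delta_{k+1}M$ by total antisymmetry, so $\chi_\phi=0$ directly.

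However, the step you dismiss as ``reassembling these factors'' is precisely where your argument, carried to completion, would \emph{not} reproduce the displayed constant. The $(2q+1)!$ you correctly extract on the right-hand side appears identically on the left-hand side: with the paper's bracket convention
\[
[A_1,\ldots,A_k] = \frac{1}{k!}\sum_{\tau\in S_k}\sgn(\tau)\,A_{\tau(1)}\cdots A_{\tau(k)},
\]
one has $\sum_{\tau\in S_{2q+1}}\sgn(\tau)\,S_{f_{\tau(0)}}\cdots S_{f_{\tau(2q)}} = (2q+1)!\,[S_{f_0},\ldots,S_{f_{2q}}]$, so both sides of \eqref{anticomm-ev} acquire the same factor $(2q+1)!$, which therefore cancels. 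The computation then lands on $\Tr[S_{f_0},\ldots,S_{f_{2q}}] = \dfrac{\kappa}{(2\pi i)^q q!}\int_M f_0\,df_1\cdots df_{2q}$, \emph{without} the stated $(2q+1)!$. (The same cancellation by $(2q)!$ occurs in the odd corollary.) The stated constants are consistent only with the \emph{unnormalized} antisymmetrization, i.e.\ without the $1/k!$ in the bracket definition; so either the bracket as defined or the displayed constants carry a normalization error, and this is exactly the kind of mismatch that ``the combinatorial bookkeeping'' would have exposed had it been carried out. Finally, a small point: cyclicity of the trace is not used on the left-hand side -- the $(2q+1)!$ there comes purely from the bracket normalization.
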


\begin{cor}
Let $M$ be a connected, closed, smooth manifold of odd dimension
$m=2q-1$. Let $D$ be a  selfadjoint elliptic pseudodifferential operator, and
 $P$ the projection onto its positive spectrum.
Then, for any $f_0$, $f_1$, \ldots, $f_{2q-1} \in C^\infty(M)$, one has
\begin{align*}
  \Tr \left[T_{f_0}, T_{f_1},  \ldots, T_{f_{2q-1}}\right] &= \kappa \frac{ q!}{(2\pi i)^q } \int_{M}
f_0df_1 \ldots df_{2q-1} ,
\end{align*}
where   $\kappa$ is the component of $ \pi_*(\ch(\sigma_{pr}(P)))$ in $H^0(M, \mathbb{Q}) \cong \mathbb{Q}$.

Also, for any $f_0$, $f_1$, \ldots, $f_{2r-1} \in C^\infty(M)$ with $r > q$,
\begin{align*}
  \Tr \left[T_{f_0}, T_{f_1},  \ldots, T_{f_{2r-1}}\right]  = 0 .
\end{align*}
\end{cor}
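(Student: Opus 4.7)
The strategy is to deduce the corollary directly from Theorem \ref{genhh-odd} applied to an antisymmetrized cochain, exploiting three facts: antisymmetrization turns the tuple $(f_0,\ldots,f_{2q-1})$ into a legitimate element of $C^{2q-1}_\wedge(M)$; since $2q-1 = \dim M$, this cochain is automatically a top-degree class in the enlarged complex $\bar{\mathbf{C}}_{AS}^\bullet(M)$; and because $f_0\,df_1\wedge\cdots\wedge df_{2q-1}$ is already a top form on $M$, only the degree-$0$ component of $\pi_*(\ch(\sigma_{pr}(P)))\wedge\Td(TM\otimes\C)$, namely the scalar $\kappa$, survives in the integral.

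Concretely, given $f_0,\ldots,f_{2q-1}\in C^\infty(M)$, set
\[
\phi \,:=\, \sum_{\tau\in S_{2q}} \sgn(\tau)\, f_{\tau(0)}\otimes f_{\tau(1)}\otimes\cdots\otimes f_{\tau(2q-1)} \,\in\, C^{2q-1}_\wedge(M).
\]
Since $\bar{\mathbf{C}}^{2q}_{AS}(M)=0$, $\phi$ represents a class in the top cohomology of $\bar{\mathbf{C}}_{AS}^\bullet(M)$, which by the canonical isomorphism $H^m(\mathbf{C}_{AS}^\bullet(M))\cong H^m(\bar{\mathbf{C}}_{AS}^\bullet(M))$ can be realized by a genuine Alexander--Spanier cocycle; in either picture, the extension of Theorem \ref{genhh-odd} to $\bar{\mathbf{C}}_{AS}^\bullet(M)$ for $0$-order symbols (spelled out at the end of the previous subsection) applies directly to $\phi$. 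The left-hand side of the theorem then reads $(2q)!\,\Tr[T_{f_0},\ldots,T_{f_{2q-1}}]$ by the very definition of the bracket, while on the right-hand side, after extracting $\kappa$ from the characteristic class factor, one is left to evaluate
\[
\int_M \sum_{\tau\in S_{2q}} \sgn(\tau)\, f_{\tau(0)}\,df_{\tau(1)}\wedge\cdots\wedge df_{\tau(2q-1)} .
\]

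The crux is now this integral. On the closed manifold $M$, integration by parts shows that $\int_M f_0\,df_1\cdots df_{2q-1}$ is totally antisymmetric in all $2q$ arguments: antisymmetry in $f_1,\ldots,f_{2q-1}$ is built into the wedge, while antisymmetry exchanging $f_0$ with $f_i$ follows from $\int_M d(f_0 f_i\,df_1\cdots\widehat{df_i}\cdots df_{2q-1}) = 0$. Hence the displayed sum equals $(2q)!\int_M f_0\,df_1\cdots df_{2q-1}$, and comparing both sides yields the asserted identity. The vanishing statement for $r>q$ is then automatic, since the corresponding antisymmetrized cochain lies in $\bar{\mathbf{C}}^{2r}_{AS}(M)=0$ (by total antisymmetry in $2r>m$ variables on a manifold of dimension $m$), so the right-hand side of Theorem \ref{genhh-odd}, and hence $\Tr[T_{f_0},\ldots,T_{f_{2r-1}}]$, vanishes. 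The main (essentially only) delicate point is the combinatorial bookkeeping that reconciles the two $(2q)!$ factors produced by antisymmetrization with the coefficient $q!/((2\pi i)^q(2q)!)$ appearing in Theorem \ref{genhh-odd}.
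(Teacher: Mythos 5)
Your strategy is the same as the paper's: pass to the enlarged complex $\bar{\mathbf{C}}^\bullet_{AS}(M)$ in which every top-degree cochain is automatically a cocycle, apply the extended Theorem~\ref{genhh-odd} to the totally antisymmetrized cochain built from $(f_0,\ldots,f_{2q-1})$, observe that in top degree only the $H^0$-component $\kappa$ of $\pi_*(\ch(\sigma_{pr}(P)))\wedge\Td(TM\otimes\C)$ survives, and use Stokes' theorem on the closed manifold $M$ to see that $\int_M f_0\,df_1\cdots df_{2q-1}$ is totally antisymmetric in all $2q$ arguments. The vanishing for $r>q$ via $\bar{\mathbf{C}}^{2r-1}_{AS}(M)=0$ (you wrote $\bar{\mathbf{C}}^{2r}_{AS}$; a harmless slip, since $2r-1>m$) is likewise the paper's reasoning.

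The genuine gap is the step you flag and then defer: the ``combinatorial bookkeeping.'' If you actually carry it out, the constants do \emph{not} close. With $\phi=\sum_{\tau\in S_{2q}}\sgn(\tau)\,f_{\tau(0)}\otimes\cdots\otimes f_{\tau(2q-1)}$, the left-hand side of \eqref{anticomm-odd} is $(2q)!\,\Tr[T_{f_0},\ldots,T_{f_{2q-1}}]$ under the $1/k!$-normalized bracket the paper defines, while the right-hand side, after extracting $\kappa$ and using the antisymmetry of the integral, equals
\begin{equation*}
\frac{q!}{(2\pi i)^q(2q)!}\,\kappa\cdot(2q)!\int_M f_0\,df_1\cdots df_{2q-1}
=\frac{q!}{(2\pi i)^q}\,\kappa\int_M f_0\,df_1\cdots df_{2q-1}.
\end{equation*}
Dividing through by $(2q)!$ gives $\Tr[T_{f_0},\ldots,T_{f_{2q-1}}]=\frac{q!}{(2\pi i)^q(2q)!}\,\kappa\int_M f_0\,df_1\cdots df_{2q-1}$, which is off by a factor of $(2q)!$ from the displayed corollary. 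The asserted identity only follows if the bracket on the left is read as the \emph{unnormalized} alternating sum $\sum_{\tau}\sgn(\tau)\,T_{f_{\tau(0)}}\cdots T_{f_{\tau(2q-1)}}$, which is the Helton--Howe convention and evidently what is meant, but it conflicts with the $1/k!$-normalized bracket recalled immediately above the corollary. You must either adopt that unnormalized reading explicitly, or conclude that with the stated normalization the constant should be $\frac{q!}{(2\pi i)^q(2q)!}$. Asserting that the bookkeeping ``reconciles'' the factors and ``comparing both sides yields the asserted identity'' skips the one nontrivial computation, and under the paper's own conventions it does not reconcile.
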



\end{document}